\newtheorem{theorem}{Theorem}[section]
\newtheorem{lemma}[theorem]{Lemma}
\newtheorem{proposition}[theorem]{Proposition}
\newtheorem{corollary}[theorem]{Corollary}
\theoremstyle{definition}
\newtheorem{conj}[theorem]{Conjecture}
\newtheorem{question}[theorem]{Question}
\newcommand{\s}{\mathfrak{s}}
\newcommand{\spinc}{{\mbox{spin$^c$} }}
\newcommand{\zee}{\mathbb{Z}}
\newcommand{\arr}{\mathbb{R}}
\newcommand{\cee}{\mathbb{C}}
\newcommand{\wt}{\widetilde}
\newcommand{\A}{{\mathcal{A}}}
\numberwithin{equation}{section}
\begin{document}

\title[Pseudoconvex embeddings of Brieskorn spheres]{Obstructing pseudoconvex embeddings and contractible Stein fillings for Brieskorn spheres}
\author{Thomas E. Mark}
\address{Department of Mathematics, University of Virginia}
\email{tmark@virginia.edu	}
\author{B\"ulent Tosun}
\address{Department of Mathematics, University of Alabama}
\email{btosun@ua.edu	}

\begin{abstract} A conjecture due to Gompf asserts that no nontrivial Brieskorn homology sphere admits a pseudoconvex embedding in $\cee^2$, with either orientation. A related question asks whether every compact contractible 4-manifold admits the structure of a Stein domain. We verify Gompf's conjecture, with one orientation, for a family of Brieskorn spheres of which some are known to admit a smooth embedding in $\cee^2$. With the other orientation our methods do not resolve the question, but do give rise to an example of a contractible, boundary-irreducible 4-manifold that admits no Stein structure with either orientation, though its boundary has Stein fillings with both orientations.
\end{abstract}

\maketitle

\section{Introduction}
A Stein manifold is a complex manifold admitting a proper Morse function that is bounded below and strictly plurisubharmonic. By a compact Stein manifold, or {\it Stein domain}, we mean a sub-level set corresponding to a regular value of such a function. Smooth manifolds supporting a Stein structure were characterized in terms of their handle structure by Eliashberg \cite{EliashbergStein}, with some refinements in the case of Stein surfaces due to Gompf \cite{Gompf}. Since that time, Stein structures have been shown to exhibit a remarkable degree of flexibility in certain senses, yet their geometric structure also imposes enough rigidity to allow classification results in other cases. An instance of flexibility is given by Gompf in \cite{gompf13}, in which it is shown that a codimension-0 submanifold $U$ of a complex surface is isotopic to a Stein subsurface if and only if the induced complex structure is homotopic through almost-complex structures to a Stein structure on $U$. Using this result, Gompf exhibits a variety of interesting examples of Stein manifolds in $\cee^2$ including domains of holomorphy that are diffeomorphic to non-standard smooth structures on $\arr^4$, and contractible Stein domains with non-simply-connected boundary.

In particular, Gompf finds many examples of hyperbolic integer homology 3-spheres that embed in $\cee^2$ as the boundary of a Stein domain in $\cee^2$. We refer to such an embedding of a 3-manifold as a {\it pseudoconvex embedding}. On the other hand, Gompf makes the following bold conjecture:

\begin{conj}\cite{gompf13}\label{conj} No Brieskorn integer homology sphere $\Sigma$ (other than $S^3$) admits a pseudoconvex embedding in $\cee^2$, with either orientation. 
\end{conj}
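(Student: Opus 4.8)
Since a conjecture in this generality is likely out of reach, the plan is to establish the statement for suitable infinite families and for one orientation, by converting a pseudoconvex embedding into a Floer-theoretic constraint fine enough to detect the \emph{Stein} structure rather than merely the smooth topology. First I would observe that a pseudoconvex embedding produces a Stein domain $W$ with $\partial W=\Sigma$ and $W\subset \cee^2\subset S^4$. Applying Mayer--Vietoris to $S^4 = W\cup_\Sigma (S^4\setminus \mathrm{int}\,W)$ and using that $\Sigma$ is an integer homology sphere shows that $W$, and its complement, are integer homology $4$-balls. Thus a pseudoconvex embedding forces $\Sigma$ to bound a \emph{Stein integer homology ball}, inducing on $\Sigma$ a Stein fillable (hence tight) contact structure $\xi$. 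This reformulated statement is what I would obstruct.

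The first constraint is the \OS correction term: any homology sphere bounding a homology ball has $d(\Sigma)=0$ (Fr\o yshov; \OnS). This rules out every Brieskorn sphere with $d\neq 0$, but it cannot suffice, since members of the relevant family bound \emph{smooth} homology balls and therefore have $d=0$; this is precisely the phenomenon flagged in the abstract. To see the Stein structure I would pass to the \OS contact class $\widehat{c}(\xi)\in \hfhat(-\Sigma)$, which is nonzero for any Stein fillable $\xi$, together with its absolute grading. That grading is governed by the Hopf invariant $d_3(\xi)$, and for a Stein filling $W$ one has $d_3(\xi)=\tfrac14(c_1(W)^2-2\chi(W)-3\sigma(W))$; for a homology ball $c_1$ is torsion, $\chi(W)=1$, and $\sigma(W)=0$, forcing $d_3(\xi)=-\tfrac12$. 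Hence $\widehat{c}(\xi)$ must be a nonzero class in degree $0$ of $\hfhat(-\Sigma)$, and its image in $\HFp(-\Sigma)$ must interact with the $U$-tower exactly as a homology-ball Stein filling dictates.

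The heart of the argument is an explicit computation for the chosen family. Brieskorn spheres are links of singularities bounding canonical negative-definite plumbings, so $\HFp$ and $\hfhat$ are computable by the \OS plumbing algorithm, equivalently by N\'emethi's lattice cohomology. For the relevant orientation I would compute $\HFp(-\Sigma)$ near the base of its $U$-tower (which sits in degree $d(-\Sigma)=0$) together with the reduced part, and then test whether any class can simultaneously (i) be nonzero in degree $0$, (ii) survive to a nonzero image in $\HFp(-\Sigma)$, and (iii) be consistent with the cobordism map induced by a homology-ball Stein filling. The goal is to show that for these Seifert geometries the reduced homology is arranged so that no such class exists, contradicting the nonvanishing of $\widehat{c}(\xi)$.

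The main obstacle, and the reason the method resolves only one orientation, lies in this compatibility step. For the favorable orientation the Stein filling supplies a \emph{sign-definite} comparison between the forced grading of $\widehat{c}(\xi)$ and the computed location of the reduced Floer homology, so the two collide; for the opposite orientation the corresponding quantity is no longer sign-definite and a contact class can plausibly occupy an allowed slot, so the obstruction dissolves. Executing the plumbing computation uniformly across the family, and verifying a genuine grading collision rather than merely $d=0$, is where the real work lies, and it is also what connects the argument to the contractible-filling examples promised in the abstract.
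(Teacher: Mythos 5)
Your opening reduction is exactly the paper's: a pseudoconvex embedding of $\Sigma$ in $\cee^2$ yields an acyclic Stein filling, and an acyclic (indeed any weak symplectic acyclic) filling forces $d_3(\xi)=-\tfrac12$, equivalently $\theta(\xi)=-2$ in the paper's notation. The gap is in the step you call the ``heart of the argument.'' The Floer-theoretic constraints you impose on the hypothetical contact class are all simultaneously satisfiable, so no contradiction can come out of them. Since $\Sigma(2,3,12n+1)$ bounds a smooth homology ball, $d(\pm\Sigma(2,3,12n+1))=0$, so the $U$-tower of $\HFp$ of \emph{either} orientation begins in degree $0$; hence $\hfhat$ in degree $0$ is nonzero and contains a class mapping onto the bottom of the tower (which lies in $\ker U$). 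Such a class meets your conditions (i) and (ii), and (iii) is consistent as well: the punctured homology ball gives a degree-preserving cobordism map to $\hfhat(S^3)$ (since $c_1^2=\sigma=0$ and $\chi=0$ after puncturing), and nothing in the functoriality package forbids a degree-$0$ class from hitting the generator --- indeed for the standard tight $S^3$ the contact invariant \emph{is} the tower-bottom class in degree $0$, so there is no structural prohibition on contact classes occupying such slots. In short, there is no ``grading collision'' to be found: the invariants you propose to compute are precisely the ``standard invariants'' that the paper points out fail to obstruct acyclic fillings for this family.

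What is missing is not a computation but a classification. Heegaard Floer theory constrains a tight contact structure once one is handed to you; it does not enumerate which classes in $\hfhat$ are realized as contact invariants, so it cannot by itself exclude a tight contact structure on $-\Sigma(2,3,12n+1)$ with $\theta=-2$. The paper's actual argument imports exactly this missing ingredient from \cite{T16} (adapting Ghiggini--Van Horn-Morris \cite{GVHM}): by Giroux--Honda convex surface theory, \emph{every} tight contact structure on $-\Sigma(2,3,6m+1)$ has $\theta=+2$. Since weakly fillable implies tight, and an acyclic weak filling would force $\theta=-2$, Theorem \ref{mainthm} and Corollary \ref{embcor} follow. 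Note also that your explanation for why only one orientation is accessible (``sign-definiteness'' of a comparison) is not the real reason: with the standard orientation the two tight contact structures on $\Sigma(2,3,12n+1)$ genuinely have $\theta=-2$ (Theorem \ref{thm3}), so the homotopy-theoretic obstruction simply vanishes there; ruling out that case requires the different, manifold-specific argument of Theorem \ref{AKthm}.
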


As Gompf points out, many Brieskorn spheres do not embed even smoothly in $\cee^2$. This paper provides the first evidence for Conjecture \ref{conj} in some cases where $\Sigma$ does have a smooth embedding in $\cee^2$.

A question closely related to the existence of pseudoconvex embeddings has been raised in contact- and symplectic-geometric circles:

\begin{question}\label{steinquest} Does every compact contractible 4-manifold with Stein fillable boundary admit a Stein structure?
\end{question}

See, for example, Yasui \cite{yasui17}, where the connection between this question---specialized to the case that the boundary is the 3-sphere---and the smooth 4-dimensional Poincar\'e conjecture is described. The results below show that the answer to Question \ref{steinquest} is negative in general.

A relation between Question \ref{steinquest} and Conjecture \ref{conj} is as follows. First note that if a contractible 4-manifold $X$ admits a handle decomposition with a single 1-handle, a single 2-handle, and no 3- or 4-handles, then $X$ embeds smoothly in $\cee^2$ (the same is true if there are more 1- and 2-handles, as long as the corresponding presentation of the fundamental group is Andrews-Curtis trivial; see \cite[Example 3.2]{gompf13}). If such a contractible manifold admits a Stein structure, then using Gompf's isotopy result above, it follows that $X$ is diffeomorphic to a Stein domain in $\cee^2$ and in particular its boundary admits a pseudoconvex embedding in $\cee^2$. Conversely, if a homology 3-sphere has a pseudoconvex embedding in $\cee^2$, then it is clearly the boundary of an acyclic Stein manifold (for the orientation question, see \cite[Proposition 7.4]{gompf13}).

We consider these issues in the context of the family of Brieskorn spheres $\Sigma(2,3, 6m\pm 1)$. For many members of this ubiquitous family, Gompf's conjecture \ref{conj} can be answered affirmatively by purely topological means. We have:
\begin{itemize}
\item For odd $m$, both $\Sigma(2,3,6m-1)$ and $\Sigma(2,3, 6m+1)$ have nontrivial Rohlin invariant, hence neither bounds an acyclic 4-manifold. In particular these manifolds do not admit even a smooth embedding in $\cee^2$.
\item For even $m$, the Brieskorn sphere $\Sigma(2,3,6m-1)$ has $R = 1$, where $R$ is the invariant of Fintushel-Stern. By  Theorem 1.1 of \cite{FSpseudofree}, it follows that none of these manifolds bound acyclic either.
\end{itemize}

This leaves the family $\Sigma(2,3,12n+1)$, for which no standard invariants (e.g., Rohlin or $\bar{\mu}$, Fintushel-Stern's $R$, the Heegaard-Floer $d$-invariant, or Manolescu's lift $\beta$ of the Rohlin invariant \cite{manolescuPin2}) obstruct an acyclic filling. In fact it is known by work of Akbulut-Kirby \cite{AKmazur} and Casson-Harer \cite{cassonharer} in the case $n=1$, and Fickle \cite{fickle} when $n=2$, that both $\Sigma(2,3,13)$ and $\Sigma(2,3,25)$ are the boundaries of smooth contractible 4-manifolds of ``Mazur type,'' meaning each has a handle decomposition with one 1-handle and one 2-handle. In particular, both of these admit smooth embeddings in $\cee^2$. For $n\geq 3$, neither an acyclic 4-manifold bounding $\Sigma(2,3,12n+1)$ nor an embedding in $\cee^2$ appear to be known, but it seems plausible that both exist.

The main result of this paper is the following, where we write $-M$ for a manifold $M$ after reversing its orientation.

\begin{theorem}\label{mainthm} Suppose $X$ is a smooth compact oriented acyclic 4-manifold whose boundary is the Brieskorn sphere $-\Sigma(2,3,12n+1)$ with the opposite of its usual orientation, for some $n\geq 1$. Then $X$ admits no symplectic structure weakly filling a contact structure on its boundary.
\end{theorem}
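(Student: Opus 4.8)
The plan is to convert the symplectic hypothesis into Heegaard Floer data on the boundary and then confront it with a computation for $\Sigma(2,3,12n+1)$. Suppose $\omega$ weakly fills a contact structure $\xi$ on $\partial X=-\Sigma(2,3,12n+1)$. Since $\partial X$ is an integral homology sphere, $H^2(\partial X;\arr)=0$, so the restriction $[\omega]|_{\partial X}$ vanishes and $\omega$ can be deformed in a collar to make the boundary convex. Hence I may assume that $(X,\omega)$ is a \emph{strong} symplectic filling of $(-\Sigma(2,3,12n+1),\xi)$ by an acyclic $4$-manifold.

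\textbf{Extracting Floer data.} The strong filling endows $X$ with an almost-complex structure inducing $\xi$, with canonical class $c_1(X,\omega)\in H^2(X)=0$. As $X$ is acyclic, $\chi(X)=1$ and $\sigma(X)=0$, so in the normalization of \cite{Gompf} the three-dimensional (Hopf) invariant of $\xi$ is
\[
d_3(\xi)=\tfrac14\big(c_1(X,\omega)^2-2\chi(X)-3\sigma(X)\big)=-\tfrac12 ,
\]
the same value as for the standard field on $\partial B^4=S^3$; in particular the homotopy class of $\xi$ as an oriented $2$-plane field is completely determined. By \OnS's nonvanishing theorem a strongly fillable contact structure has nonvanishing contact invariant $c(\xi)\in\hfhat(-\partial X)=\hfhat(\Sigma(2,3,12n+1))$, and $d_3(\xi)=-\tfrac12$ places $c(\xi)$ in the degree-zero summand. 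Moreover, since $X$ is acyclic, $X$ with an open ball removed is a homology cobordism from $S^3$ to $-\Sigma(2,3,12n+1)$, so $d(\Sigma(2,3,12n+1))=0$ and the bottom of the $U$-tower of $\HFp(\Sigma(2,3,12n+1))$ also lies in degree zero.

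\textbf{The crux.} At this level there is as yet no contradiction, and this is precisely what makes the theorem delicate: the degree-zero group is nonzero (it contains the tower generator), the map induced by the filling carries $c(\xi)$ to the generator of $\hfhat(S^3)$, and everything is consistent with $c(\xi)$ being the image of the tower bottom --- which is exactly why an acyclic \emph{smooth} filling is not obstructed, in agreement with the vanishing of the Rohlin, $\bar\mu$, Fintushel--Stern, $d$- and $\beta$-invariants recorded above. The additional leverage must come from the symplectic hypothesis together with the \emph{reduced} homology $\HFp_{\mathrm{red}}(\Sigma(2,3,12n+1))$, which is what distinguishes these spheres from $S^3$. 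Here I would compute $\HFp(\Sigma(2,3,12n+1))$ explicitly through \OnS's algorithm for plumbed Seifert fibered three-manifolds, record the gradings of $\HFp_{\mathrm{red}}$ and the $U$-action in low degree, and then apply a refinement of the nonvanishing theorem --- one sensitive to the position of $c(\xi)$ relative to $\HFp_{\mathrm{red}}$ and to the image of $HF^\infty$ --- to force a relation that a strong, acyclic filling cannot satisfy, yielding $c(\xi)=0$ and the desired contradiction.

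\textbf{Main obstacle.} The heart of the difficulty is therefore not the reductions but producing the refined, symplectically sensitive obstruction: one must isolate the feature of a strong $b_2^+=0$ filling that is invisible to the $d$-invariant yet detected by $\HFp_{\mathrm{red}}$, and then verify, uniformly in $n$, that the relevant part of $\HFp_{\mathrm{red}}(\Sigma(2,3,12n+1))$ is nontrivial and positioned so as to obstruct the filling.
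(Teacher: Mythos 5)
Your opening reductions are correct and coincide with the paper's starting point: on an integral homology sphere a weak filling can be deformed near the boundary to a strong one, and acyclicity of $X$ forces $c_1^2=\sigma=0$, $\chi=1$, hence $d_3(\xi)=-\tfrac12$, equivalently $\theta(\xi)=-2$ in the paper's normalization. But beyond that point the proposal stops being a proof. The entire content of the theorem is to show that \emph{no} tight (in particular, no weakly fillable) contact structure on $-\Sigma(2,3,12n+1)$ lies in this homotopy class, and you explicitly defer exactly this step to an unnamed ``refinement of the nonvanishing theorem'' that is neither stated nor proved, together with an unperformed computation of $HF^+_{\mathrm{red}}(\Sigma(2,3,12n+1))$ for all $n$. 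As you yourself concede in the ``crux'' paragraph, the data you have assembled (nonvanishing $c(\xi)$, its grading, the $d$-invariant) is perfectly consistent with the existence of a filling, so nothing has been obstructed. This is a genuine gap, and it sits precisely where all the work of the theorem lies.

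The paper closes this gap not with Floer theory but with the classification of tight contact structures: by Theorem \ref{thm2} (from \cite{T16}, adapting Ghiggini--Van Horn-Morris), $-\Sigma(2,3,6m+1)$ carries exactly $\tfrac12 m(m+1)$ tight contact structures, all homotopic with $\theta=+2$; taking $m=2n$, any weakly fillable structure is tight, hence has $\theta=2\neq-2$, contradicting your (and the paper's) computation for an acyclic filling. It is worth noting why your proposed Floer route is doubtful as stated, not merely incomplete: with the standard orientation, $\Sigma(2,3,12n+1)$ \emph{does} carry Stein fillable contact structures with $\theta=-2$ (Theorem \ref{thm3}), and for that orientation the paper cannot rule out acyclic symplectic fillings at all --- it only rules out Stein structures on one specific contractible manifold $B$, by a Seiberg--Witten argument on a closed-up 4-manifold (Theorem \ref{AKthm}). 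So any formal refinement of the contact-invariant machinery of the kind you postulate would have to capture exactly the orientation-sensitive information that, in the paper, comes from convex surface theory; nothing in your outline identifies such a tool, and verifying it ``uniformly in $n$'' is not a routine computation but an open-ended research problem.
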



Note that $\Sigma(2,3,12n+1)$, with either orientation, does admit Stein fillings (and with one orientation supports Stein fillable contact structures having $\theta = -2$: see below).
However, since a Stein structure provides a weak symplectic filling of the induced contact structure on the boundary, Theorem \ref{mainthm} together with the constructions of Akbulut-Kirby, Casson-Harer, and Fickle, proves the following, which gives a negative answer to Question \ref{steinquest}:
 
\begin{corollary}\label{nonsteincor} No acyclic oriented 4-manifold with boundary $-\Sigma(2,3,12n+1)$ admits a Stein structure. In particular, there exist contractible, boundary-irreducible 4-manifolds $X$ that do not admit Stein structures.
\end{corollary}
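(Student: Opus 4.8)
The plan is to derive the corollary directly from Theorem~\ref{mainthm}, since all of the gauge-theoretic content has already been placed in that theorem; what remains is to recognize a Stein structure as a weak symplectic filling and to supply concrete contractible manifolds.

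First I would prove the opening assertion. The essential input is the classical fact that a Stein domain is a strong, hence weak, symplectic filling of the contact structure induced on its strictly pseudoconvex boundary: if $\phi$ is a strictly plurisubharmonic Morse function exhibiting the Stein structure, then $\omega = -dd^c\phi$ is a symplectic form taming the complex structure, and it dominates the contact structure $\xi$ of complex tangencies on the boundary, with the boundary orientation induced by the complex orientation. Consequently, if $X$ were an acyclic oriented $4$-manifold with $\partial X = -\Sigma(2,3,12n+1)$ admitting a Stein structure compatible with its orientation, then $X$ would carry a symplectic structure weakly filling a contact structure on $-\Sigma(2,3,12n+1)$, contradicting Theorem~\ref{mainthm}. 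Hence no such $X$ admits a Stein structure.

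Next I would exhibit the examples claimed in the second sentence. By Akbulut--Kirby and Casson--Harer, $\Sigma(2,3,13)$ bounds a smooth contractible Mazur-type $4$-manifold $W_1$, and by Fickle $\Sigma(2,3,25)$ bounds such a $W_2$; these realize $n=1$ and $n=2$. Reversing orientation, each $X_i := -W_i$ is again contractible (so oriented and acyclic) with $\partial X_i = -\Sigma(2,3,12n+1)$ for $n=1,2$ respectively. The first part now applies to each $X_i$, so neither admits a Stein structure. Finally, $\Sigma(2,3,12n+1)$ is a Seifert-fibered homology sphere with infinite fundamental group, hence an irreducible (indeed aspherical) $3$-manifold, and irreducibility is insensitive to orientation; thus each $X_i$ is boundary-irreducible, as required.

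Because Theorem~\ref{mainthm} does the real work, the argument is short, and the only place where care is genuinely needed---the closest thing to an obstacle---is orientation bookkeeping. One must check (i) that the complex orientation of the hypothetical Stein structure is precisely the one making the boundary $-\Sigma(2,3,12n+1)$, so that it is Theorem~\ref{mainthm} and not a statement about the opposite orientation that is being invoked, and (ii) that the Mazur manifolds of Akbulut--Kirby, Casson--Harer, and Fickle bound $\Sigma(2,3,12n+1)$ with the orientation convention in force here, so that passing to $-W_i$ yields the boundary orientation required by the theorem. Neither point is deep, but a sign error here would invalidate the whole deduction.
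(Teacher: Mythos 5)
Your proposal is correct and matches the paper's own argument: the paper likewise deduces the corollary from Theorem~\ref{mainthm} via the fact that a Stein structure weakly fills the induced contact structure on the boundary, and obtains the explicit examples from the Akbulut--Kirby, Casson--Harer, and Fickle contractible manifolds with orientation reversed. Your added care about orientation conventions and the irreducibility of Brieskorn spheres (ensuring boundary-irreducibility) is exactly the bookkeeping the paper leaves implicit.
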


In fact, there exist $X$ as in Corollary \ref{nonsteincor} such that neither $X$ nor $-X$ admit a Stein structure, as follows from Theorem \ref{AKthm} below.

Correspondingly, relating to Conjecture \ref{conj}, we have:

\begin{corollary}\label{embcor} No member of the family $-\Sigma(2,3,12n+1)$ admits a pseudoconvex embedding in $\cee^2$.
\end{corollary}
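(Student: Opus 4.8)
The plan is to derive this statement as a direct consequence of Theorem \ref{mainthm}, arguing by contradiction and invoking the standard dictionary between pseudoconvex embeddings, Stein domains, and symplectic fillings that was laid out in the introduction. Concretely, suppose toward a contradiction that for some $n\geq 1$ the manifold $-\Sigma(2,3,12n+1)$ admits a pseudoconvex embedding in $\cee^2$. By the definition of such an embedding, this realizes $-\Sigma(2,3,12n+1)$ as the boundary of a Stein domain $X\subset\cee^2$, with $\partial X=-\Sigma(2,3,12n+1)$ in the orientation induced as the boundary of the complex domain $X$.

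First I would observe that $X$ may be taken to be acyclic: since $-\Sigma(2,3,12n+1)$ is an integer homology sphere smoothly embedded in $\cee^2\subset S^4$, the bounded complementary region is forced by Alexander duality and Mayer--Vietoris to be an integer homology ball, so $X$ is acyclic. This is exactly the reduction already asserted in the introduction (``it is clearly the boundary of an acyclic Stein manifold''), with the orientation-sensitive version supplied by \cite[Proposition 7.4]{gompf13}. Next, a Stein domain carries a symplectic form that strongly---hence weakly---fills the contact structure induced on its boundary. Thus $X$ is a smooth compact oriented acyclic 4-manifold with $\partial X=-\Sigma(2,3,12n+1)$ that admits a symplectic structure weakly filling a contact structure on its boundary. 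This contradicts Theorem \ref{mainthm}, and so no such pseudoconvex embedding can exist.

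The entire analytic content of the statement is carried by Theorem \ref{mainthm}; the corollary itself is a formal deduction. Accordingly, the only point I expect to require genuine care is the orientation bookkeeping in passing from the embedding to the filling---namely, confirming that the boundary orientation of the Stein domain $X$ matches the hypothesis of Theorem \ref{mainthm} (i.e. that it is $-\Sigma(2,3,12n+1)$ rather than its reverse), together with the acyclicity of $X$. This is precisely the issue addressed by \cite[Proposition 7.4]{gompf13}, which is the reference I would lean on to make the orientation conventions rigorous.
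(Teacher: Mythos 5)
Your proposal is correct and follows essentially the same route as the paper: the paper deduces Corollary \ref{embcor} precisely by combining Theorem \ref{mainthm} (equivalently Corollary \ref{nonsteincor}) with the dictionary set up in its introduction, namely that a pseudoconvex embedding of a homology sphere in $\cee^2$ yields an acyclic Stein domain bounding it (Alexander duality for the acyclicity, with the orientation point handled by \cite[Proposition 7.4]{gompf13}), whose Stein structure weakly fills the induced contact structure. Your orientation bookkeeping and acyclicity reduction are exactly the points the paper relies on, so there is nothing to add.
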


Theorem \ref{mainthm} and its corollaries follow from results of the second author on classification of contact structures \cite{T16}, as we now demonstrate. Recall that a Stein domain induces a natural contact structure on its boundary as the field of complex tangencies. The homotopy class of an oriented tangent 2-plane field $\xi$ on an oriented homology sphere $Y$ is determined by an invariant $\theta(\xi)\in 2\zee$ defined by
\[
\theta(\xi) = c_1^2(X,J) -3\sigma(X) -2\chi(X),
\]
for any almost-complex 4-manifold $(X,J)$ with $\partial X = Y$ and such that $\xi$ is the field of complex tangencies $TY\cap J(TY)$. Here $\chi(X)$ is the Euler characteristic and $\sigma(X)$ is the signature of the cup product pairing on $H^2(X;\zee)$. Thus, if $(Y,\xi)$ is the contact boundary of an acyclic Stein manifold $(X,J)$, then necessarily $\theta(\xi) = -2$. The same conclusion holds if $(Y,\xi)$ is weakly symplectically filled by an acyclic manifold $(X,\omega)$, since we can select an almost-complex structure compatible with both $\omega$ and $\xi$.

\begin{theorem}\label{thm2}\cite{T16} For any $m\geq 1$, the oppositely-oriented Brieskorn manifold $-\Sigma(2,3,6m+1)$ admits exactly $\frac{1}{2}m(m+1)$ tight contact structures up to isotopy. All these contact structures are homotopic and have $\theta = 2$; at least $m$ of them are Stein fillable.
\end{theorem}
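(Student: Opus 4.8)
The plan is to follow the now-standard strategy for classifying tight contact structures on small Seifert fibered spaces: an upper bound from Giroux--Honda convex surface theory, matched by a lower bound from explicit Legendrian surgery, followed by a direct computation of $\theta$.

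First I would record a Seifert presentation of $-\Sigma(2,3,6m+1)$ as a fibration over $S^2$ with three exceptional fibers; reversing orientation flips the sign of the Euler number, and one checks that the normalized Seifert invariants land in the range covered by the convex-surface classification (Wu, Ghiggini--Lisca--Stipsicz), with no Giroux torsion to account for. Equivalently, I would present the manifold by a surgery diagram built from a trefoil together with a linear chain of unknots encoding the $1/m$-type coefficient. I would then Legendrian-realize the three singular fibers and a generic regular fiber, take standard convex neighborhoods, and observe that the complement is $\Sigma\times S^1$ over a pair of pants carrying an essentially product contact structure.

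For the upper bound, the heart of the argument is Honda's classification of tight contact structures on a solid torus: after normalizing the dividing sets on the boundary tori to minimal slope, the tight fillings of each singular-fiber neighborhood are enumerated by the continued-fraction expansion of the corresponding Seifert invariant, and the edge-rounding and gluing compatibility across $\Sigma\times S^1$ restricts the admissible combinations. Pushing this bookkeeping through---and using the standard shuffling and state-transition moves to eliminate configurations that are actually isotopic---I expect the number of candidate tight structures to collapse to exactly $\frac12 m(m+1)$. This slope-tracking and overcount-elimination is the main technical obstacle, and is where the precise arithmetic of $2,3,6m+1$ enters.

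For the lower bound and non-isotopy, I would build Stein fillable representatives from the Legendrian surgery diagram above, letting the rotation numbers of the chain components range over all admissible values at fixed maximal Thurston--Bennequin number to produce the candidate list and, in particular, at least $m$ Stein fillable members. Since all these plane fields turn out to be homotopic, $\theta$ cannot separate them; to prove distinctness I would pass to a finer invariant, most naturally the \OS contact class in $\hfhat$, or equivalently the $\mathrm{spin}^c$ and first-Chern data distinguishing the associated Stein fillings. The homotopy and $\theta=2$ claims then follow together: on a homology sphere $\theta(\xi)$ is a complete invariant of the homotopy class of $\xi$, so it suffices to evaluate $\theta=c_1^2(X,J)-3\sigma(X)-2\chi(X)$ on one explicit (non-acyclic) Stein filling $X$ coming from the diagram, which yields $\theta=2$; since every tight structure in the list arises from such a surgery, they all share this value and are therefore mutually homotopic.
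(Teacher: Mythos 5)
There is a genuine gap here, and it sits precisely at the step that makes this theorem hard. First, note that the paper does not prove Theorem \ref{thm2} at all: it quotes \cite{T16}, whose proof adapts the Ghiggini--Van Horn-Morris argument \cite{GVHM}. Your setup starts from a false premise about why such an argument is even needed. With the reversed orientation, $-\Sigma(2,3,6m+1)$ is the small Seifert fibered space $M(-2;\frac{1}{2},\frac{2}{3},\frac{5m+1}{6m+1})$, so $e_0=-2$; this is exactly one of the two excluded cases ($e_0=-1,-2$) in the classifications of Wu and Ghiggini--Lisca--Stipsicz. If those results applied, neither \cite{T16} nor \cite{GVHM} would need to exist. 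The convex-surface bookkeeping can still be organized to give the upper bound $\frac{1}{2}m(m+1)$, but it is not where the real difficulty lies.

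The fatal problem is your lower bound. You propose to realize the entire candidate list by Legendrian (negative contact) surgery, which would make all $\frac{1}{2}m(m+1)$ structures Stein fillable. But only $m$ of them are known to be---that is exactly why the theorem says ``at least $m$.'' Concretely, $-\Sigma(2,3,6m+1)$ is $+1/m$-surgery on the left-handed trefoil, whose maximal Thurston--Bennequin number is $-6$, so the natural contact-surgery presentations of the candidates are \emph{positive} contact surgeries; the honest Legendrian surgery diagram (from the $e_0=-2$ plumbing description) accounts for only $m$ isotopy classes. For the remaining $\frac{1}{2}m(m-1)$ candidates, tightness cannot be certified by exhibiting fillings: in \cite{T16}, following \cite{GVHM}, it is certified by showing their Ozsv\'ath--Szab\'o contact invariants are nonzero and pairwise distinct in $\hfhat(\Sigma(2,3,6m+1))$, via surgery exact triangles applied to these positive contact surgeries. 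This Heegaard Floer input is the heart of the proof and is absent from your outline; indeed your phrase ``or equivalently the $\mathrm{spin}^c$ and first-Chern data distinguishing the associated Stein fillings'' conflates two tools that are not equivalent here, since most of the structures have no Stein fillings to compare. (Your final step is fine: on an integer homology sphere $\theta$ determines the homotopy class of a plane field, so evaluating $\theta=2$ on explicit representatives yields the homotopy statement.)
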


The proof, given by the second author in \cite{T16}, follows from an adaptation of a beautiful argument due to Ghiggini and Van Horn-Morris \cite{GVHM} who prove an analogous result for the manifolds $-\Sigma(2,3,6m-1)$ . Taking $m= 2n$, Theorem \ref{thm2} implies Theorem \ref{mainthm}, because any (weakly) fillable contact structure is tight, and since none of the tight contact structures have $\theta = -2$, none are filled by an acyclic symplectic manifold.


With its natural orientation, the classification of tight contact structures on $\Sigma(2,3,6m+1)$ is simpler:

\begin{theorem}\label{thm3} For any $m\geq 1$, the Brieskorn sphere $\Sigma(2,3,6m+1)$ admits exactly two tight contact structures up to isotopy. Both are Stein fillable, and both have $\theta = -2$; in fact the two contact structures are contactomorphic.
\end{theorem}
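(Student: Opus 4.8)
The plan is to establish the count of two in two matching halves --- an upper bound of two via convex surface theory and a lower bound of two via explicit Stein fillings --- and then to produce the contactomorphism directly from a symmetry of the underlying Brieskorn variety. Throughout I work with the natural (link-of-singularity) orientation, for which the orbifold Euler number of the Seifert fibration on $\Sigma(2,3,6m+1)$ is negative. This is precisely the orientation for which the Milnor-fillable contact structure dominates and the classification is rigid, in contrast to the reversed orientation handled in Theorem \ref{thm2}; that rigidity is what forces the answer to be independent of $m$.

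For the upper bound I would set up the Seifert-fibered picture, presenting $\Sigma(2,3,6m+1)$ as $\Sigma_0\times S^1$ --- with $\Sigma_0$ a thrice-punctured sphere --- together with three solid tori $V_1,V_2,V_3$ glued along the boundary according to the Seifert invariants of orders $2$, $3$, and $6m+1$. Given any tight contact structure, I would isotope the fibration so that the three exceptional fibers are Legendrian with maximal twisting and the vertical separating tori $\partial V_i$ are convex. By the convex surface theory of Giroux and Honda, the contact structure is then determined by the dividing sets on these tori together with the tight structures on the pieces, and Honda's classification of tight contact structures on solid tori (via continued fractions of the boundary slopes) bounds the contributions of the $V_i$. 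With the negative-Euler-number sign the slopes are forced: the orders $2$ and $3$ admit no freedom, the continued-fraction choices associated to the $(6m+1)$-fiber are all pinned down, and the only surviving degree of freedom is a single sign recording the relative Euler class of the background region. This leaves at most two admissible configurations --- the same mechanism as in Ghiggini--Van Horn-Morris, simplified by the orientation.

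For the lower bound I would exhibit the two contact structures directly, each by a Legendrian surgery diagram (equivalently, as the boundary of a Stein handlebody). Each is then Stein fillable and hence tight by the theorem of Gromov and Eliashberg, so tightness requires no separate argument; the two differ by the conjugate choice of rotation numbers, which is exactly the binary freedom left by the convex-surface count. Since $\Sigma(2,3,6m+1)$ is a homology sphere, each Stein filling is acyclic, so the formula $\theta = c_1^2 - 3\sigma - 2\chi$ evaluates to $-2$ for both; in particular they are homotopic as plane fields. I would then separate them as isotopy classes either by computing their Ozsv\'ath--Szab\'o contact invariants in $\widehat{HF}(-\Sigma(2,3,6m+1))$ (which has rank large enough to accommodate two distinct classes) or, equivalently, by reading off the opposite signs of the relative Euler data from the convex decomposition. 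Matching this lower bound of two against the upper bound of two completes the enumeration.

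Finally, to see that the two are contactomorphic, I would invoke the symmetry of $\Sigma(2,3,6m+1)$ induced by complex conjugation on $\{x^2+y^3+z^{6m+1}=0\}$: this is an orientation-preserving self-diffeomorphism carrying one of the two contact structures to the other, hence a contactomorphism, yet it represents a nontrivial mapping class, which is exactly what makes the two structures non-isotopic despite being contactomorphic. I expect the main obstacle to be the sharpness of the convex-surface upper bound --- pinning down the contact structure on the background $\Sigma_0\times S^1$ piece and using Honda's gluing theory to rule out the a priori longer list of dividing-curve configurations as overtwisted, while simultaneously certifying that the two surviving configurations are realized by genuinely distinct tight structures. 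The continued-fraction bookkeeping together with the tightness-versus-overtwistedness dichotomy is where essentially all the work lies; by comparison the Stein constructions and the conjugation symmetry are routine.
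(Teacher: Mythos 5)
Your overall architecture---a convex-surface-theory upper bound of two, a lower bound from two explicit Stein handlebodies, and a symmetry producing the contactomorphism---matches the paper's proof. But one step in your lower-bound half is simply false: you assert that ``since $\Sigma(2,3,6m+1)$ is a homology sphere, each Stein filling is acyclic,'' and deduce $\theta=-2$ from that. A Stein filling of a homology sphere need not be acyclic; the boundary being a homology sphere only forces the intersection form of the filling to be unimodular. The fillings actually relevant here are far from acyclic: the paper's Stein handlebodies (obtained via the Ding--Geiges--Stipsicz algorithm from the description of $\Sigma(2,3,6m+1)$ as $-1/m$-surgery on the right trefoil, i.e.\ Legendrian surgery on $m$ parallel pushoffs of a once-stabilized trefoil) have $b_2=m$, and the Milnor fiber itself is a Stein filling with $b_2=12m$. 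Indeed, if Stein fillings of these manifolds were automatically acyclic, Theorem \ref{mainthm} and Corollary \ref{nonsteincor} would be vacuous. The correct computation goes through the explicit filling: $c_1^2=-m$, $\sigma=-m$ (the linking matrix is $-I_m$ since contact pushoffs of a $\mathrm{tb}=0$ Legendrian are unlinked), and $\chi=m+1$, whence $\theta=c_1^2-3\sigma-2\chi=-m+3m-2(m+1)=-2$.

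Two further points. First, in your upper bound you predict that the continued-fraction choices at the $(6m+1)$-fiber are ``all pinned down'' and that the residual binary freedom is a sign in the background region; the paper's analysis gives exactly the opposite bookkeeping. After the bypass/thickening arguments one reaches a decomposition in which the complement of the solid torus $V_3$ around the $(6m+1)$-fiber (containing $V_1$, $V_2$, and a vertical annulus) carries a \emph{unique} tight structure, while $V_3$ has boundary slope corresponding to $-\frac{m+1}{m}=[-2,\dots,-2]$ ($m$ terms), so Honda's solid-torus classification gives exactly two tight structures there; that is where the number two comes from, and as stated your version of the count does not reflect the actual geometry. Second, your contactomorphism via complex conjugation on $\{x^2+y^3+z^{6m+1}=0\}$ is plausible but incomplete: conjugation permutes the two isotopy classes of tight structures, and you must rule out that it fixes both, e.g.\ by showing it reverses the relevant rotation/Chern data identifying which class is which. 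The paper sidesteps this entirely: its two structures are given by Legendrian surgery diagrams differing only in the choice of stabilization, which are exchanged by a rotation of $S^3$ about a vertical axis preserving the standard contact structure, so the contactomorphism is immediate, while non-isotopy follows from Lisca--Mati\'c applied to the two Stein structures with distinct Chern classes.
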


Thus, though the classification of contact structures is simple with this orientation, the homotopy obstruction does not rule out acyclic fillings this case. The proof of Theorem \ref{thm3} is given in Section \ref{contactsec}.

However, we have the following result whose proof appears in Section \ref{HFsec}.

\begin{theorem}\label{AKthm} Let $B$ denote the contractible smooth 4-manifold with $\partial B = \Sigma(2,3,13)$ constructed by Akbulut and Kirby \cite{AKmazur}, whose handle diagram appears in Figure \ref{4mfdsfig}(b). Then $B$ does not admit a Stein structure with either orientation.
\end{theorem}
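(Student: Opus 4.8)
The proof splits along the two orientations of $B$, and only one of the two requires genuinely new input. For the orientation-reversed manifold the result is immediate: orientation reversal preserves the homotopy type, so $-B$ is again contractible, hence acyclic, and its boundary is $\partial(-B) = -\Sigma(2,3,13) = -\Sigma(2,3,12\cdot 1+1)$. Since a Stein structure weakly symplectically fills the induced contact structure on its boundary, Corollary \ref{nonsteincor} (equivalently Theorem \ref{mainthm} with $n=1$) shows that $-B$ admits no Stein structure. Thus the entire content lies in excluding a Stein structure on $B$ itself.

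Suppose then that $B$ carries a Stein structure $J$, inducing a contact structure $\xi$ on $\partial B = \Sigma(2,3,13) = \Sigma(2,3,6\cdot 2+1)$. Being Stein fillable, $\xi$ is tight, so by Theorem \ref{thm3} it is one of the two tight contact structures, each with $\theta(\xi) = -2$. Because an acyclic weak symplectic filling forces precisely $\theta = -2$, the homotopy obstruction is silent here; this is exactly the scenario anticipated after Theorem \ref{thm3}, and it forces a Heegaard--Floer argument that detects the \emph{specific} manifold $B$ rather than only its boundary. Any argument must be of this kind: one cannot hope to exclude every acyclic Stein filling of $(\Sigma(2,3,13),\xi)$, as that would resolve Conjecture \ref{conj} for $+\Sigma(2,3,13)$, which is left open.

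The plan is to exploit the naturality of the Ozsv\'ath--Szab\'o contact invariant under the filling. Removing an open ball turns $(B,J)$ into a Weinstein homology cobordism $V = B\setminus\mathrm{int}(B^4)$ from $(S^3,\xi_{\mathrm{std}})$ to $(\Sigma(2,3,13),\xi)$, and the reversed cobordism $\overline V$ (from $-\Sigma(2,3,13)$ to $-S^3$) induces a map $\widehat F_{\overline V}\colon \widehat{HF}(-\Sigma(2,3,13)) \to \widehat{HF}(-S^3)$ carrying the contact class $\hat c(\xi)$ to the generator $\hat c(\xi_{\mathrm{std}})\neq 0$; in particular $\widehat F_{\overline V}(\hat c(\xi))\neq 0$. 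The key point is that $\overline V$, and hence this map, is a smooth invariant of $B$ read directly from Figure \ref{4mfdsfig}(b): dualizing the single $1$-handle and single $2$-handle of $B$, the cobordism $\overline V$ is built from $-\Sigma(2,3,13)$ by a single $2$-handle attached along an explicit framed knot $\kappa$ (the belt circle of $B$'s $2$-handle) followed by a $3$-handle. I would then (i) compute $\widehat{HF}(-\Sigma(2,3,13))$ and locate both contact classes $\hat c(\xi_1),\hat c(\xi_2)$, using the Seifert-fibered description together with Theorem \ref{thm3}; (ii) evaluate the $2$-handle cobordism map of $\kappa$ via the surgery exact triangle and the mapping-cone formula; and (iii) show that the composite $\widehat F_{\overline V}$ annihilates both contact classes, contradicting the nonvanishing above.

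The main obstacle is step (iii): one must track the gradings and variance conventions precisely and, above all, pin down the position of the contact class relative to the knot filtration of $\kappa$ finely enough to certify its vanishing under the cobordism map. This is precisely where the specific smooth structure of $B$ enters essentially: an a priori different contractible filling $B'$ of $\Sigma(2,3,13)$ would produce a different belt knot and hence a cobordism map that need not kill $\hat c(\xi_i)$, so the computation cannot be replaced by any purely boundary-level (for instance $d$-invariant or $\theta$) argument, consistent with the fact that the pseudoconvex embeddability of $+\Sigma(2,3,13)$ remains unresolved.
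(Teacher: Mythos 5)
Your reduction for the reversed orientation is correct and is exactly what the paper does: $-B$ is acyclic with boundary $-\Sigma(2,3,13)=-\Sigma(2,3,12\cdot 1+1)$, so Theorem \ref{mainthm} with $n=1$ rules out a Stein structure on $-B$. The genuine gap is in the other half. What you offer for $B$ itself is a strategy whose decisive step is precisely the one you defer: everything hinges on showing that the \emph{smooth} cobordism map $\widehat{F}_{\overline{V}}$ kills both contact classes $\hat{c}(\xi_1),\hat{c}(\xi_2)\in\widehat{HF}(-\Sigma(2,3,13))$, and you neither prove this nor give any evidence that it is true. The approach is one-sided: if the computation came out nonzero you would learn nothing (nonvanishing of $\widehat{F}_{\overline{V}}$ on a contact class does not produce a Stein structure on $B$), so without carrying out step (iii) there is no argument at all. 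Worse, step (ii) as described is not a citable or routine computation: the attaching knot $\kappa$ lies in $-\Sigma(2,3,13)$, not in $S^3$, and the surgery/mapping-cone technology you invoke would have to locate the two contact classes relative to the knot filtration of a knot in a Brieskorn sphere and evaluate the induced map on those particular classes. No established result does this, and the paper itself does not know how to; so the core of the theorem is missing from your proposal.

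For contrast, the paper avoids Floer-theoretic naturality entirely and argues with closed-manifold gauge theory plus Kirby calculus. Assuming $B$ has a Stein structure, it induces one of the two tight contact structures of Theorem \ref{thm3}; since those two are contactomorphic, $B$ can be glued to the concave neighborhood $P_2$ of the divisor at infinity in the compactified Milnor fiber of $x^2+y^3+z^{13}$, producing a closed symplectic 4-manifold $Y_2=B\cup P_2$ (Lemma \ref{nonrationallemma}). Taubes' nonvanishing theorem then forces a nonzero small-perturbation Seiberg--Witten invariant, so $Y_2$ cannot be diffeomorphic to $E(1)$, which carries positive scalar curvature. On the other hand, handle moves specific to the Akbulut--Kirby diagram (Lemma \ref{cusplemma}) show that $B\cup W_2$ is a cusp neighborhood, hence $Y_2=\widetilde{E}_8\cup(\mathrm{cusp})\cong E(1)$ smoothly --- a contradiction. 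Note that the paper also has to address the gluing ambiguity (the mapping class group of $\Sigma(2,3,13)$ and the invariance of the attaching circle under its nontrivial element), an issue your proposal would face as well when matching $\xi$ against the two classified structures. If you wish to pursue your route, you must actually execute steps (i)--(iii); be aware that no such Heegaard Floer proof is known, and the statement your contradiction requires may simply be false.
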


This result does not obviously rule out a pseudoconvex embedding of $\Sigma(2,3,13)$ in $\cee^2$. However, as an application, we have the following result whose proof was communicated to us by Paul Melvin and Hannah Schwartz.

\begin{theorem}\label{corkthm} There exists a nontrivial cork that does not admit a Stein structure with either orientation.
\end{theorem}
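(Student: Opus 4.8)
The plan is to recall the definition of a cork: it is a compact, contractible, smooth $4$-manifold $C$ together with an involution $\tau$ of its boundary $\partial C$ that extends to a self-homeomorphism of $C$ but not to any self-diffeomorphism. The word ``nontrivial'' means precisely that $\tau$ does not extend diffeomorphically, so the strategy is to exhibit a single contractible manifold that is simultaneously known to be a nontrivial cork and is covered by Theorem \ref{AKthm}. The natural candidate is the Akbulut--Kirby manifold $B$ with $\partial B = \Sigma(2,3,13)$: I would first cite the fact (this is the content of the communication by Melvin and Schwartz) that $B$, with a suitable involution of its boundary, is a nontrivial cork. Combined with Theorem \ref{AKthm}, which asserts that $B$ admits no Stein structure with either orientation, this would immediately yield the claim.

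More concretely, I would structure the argument in two steps. First, I would verify that the boundary involution on $\Sigma(2,3,13)$ arising from the symmetric Mazur-type handle picture in Figure \ref{4mfdsfig}(b) is a cork twist---i.e., that the handle diagram exhibits $B$ as a symmetric link/handle configuration admitting the standard ``dot-and-zero'' exchange involution on its boundary---and that this involution does not extend over $B$ smoothly. The nonextendability is where one invokes that $B$ is genuinely exotic as a cork; Melvin and Schwartz's contribution is presumably a verification, via the known Mazur-pair structure of $B$, that the associated twist changes the smooth structure of some closed manifold into which $B$ embeds (the usual cork-detection mechanism). Alternatively, if $B$ is already recorded in the literature as a cork, I would simply cite that and reduce the first step to a reference.

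The second step is purely a matter of assembly: once $B$ is established as a nontrivial cork, Theorem \ref{AKthm} gives that neither $B$ nor $-B$ carries a Stein structure, so $B$ is a nontrivial cork with no Stein structure in either orientation, which is exactly the assertion of Theorem \ref{corkthm}. I would phrase the final proof as a short paragraph: ``By [the cork reference / the Melvin--Schwartz argument], the manifold $B$ of Theorem \ref{AKthm} admits a boundary involution making it a nontrivial cork. By Theorem \ref{AKthm}, $B$ admits no Stein structure with either orientation. This proves the theorem.''

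I expect the main obstacle to be the first step---establishing that $B$ really is a nontrivial cork, rather than merely a contractible manifold. The Stein obstruction is handed to us by Theorem \ref{AKthm}, so the mathematical content that is new here lies in recognizing $B$ as an exotic cork, i.e., exhibiting the boundary involution and certifying its nonextendability through some embedding-and-twisting argument or a citation to the cork literature. Since the authors attribute the proof to Melvin and Schwartz, I anticipate that the cork property of this specific Mazur manifold is either already known or follows from a short standard argument, and that the role of this theorem is primarily to package Theorem \ref{AKthm} into a statement about corks; accordingly I would keep the exposition brief and lean heavily on the cited cork input and on Theorem \ref{AKthm}.
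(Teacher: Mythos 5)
There is a genuine gap, and it is exactly at the point you flagged as the ``main obstacle.'' Your proof requires that the Akbulut--Kirby manifold $B$ itself, equipped with some boundary involution, be a nontrivial cork. This is not known, it is not what Melvin and Schwartz communicated, and the paper makes no such claim. Indeed, since $B$ is contractible, Freedman's theorem guarantees that \emph{any} self-diffeomorphism of $\partial B=\Sigma(2,3,13)$ extends homeomorphically over $B$; the cork question is therefore whether the (unique, up to isotopy) nontrivial mapping class of $\Sigma(2,3,13)$ fails to extend as a diffeomorphism. Nothing in the paper or in the cited literature certifies this failure, and note also that the standard ``dot--zero exchange'' mechanism you invoke does not even apply directly to $B$: in its symmetric handle picture the 2-handle carries framing $-1$, not $0$. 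If the involution in fact extends smoothly over $B$, then $(B,\phi)$ is a trivial cork and your argument collapses; deciding this would be a new theorem, not a citation.

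The paper's proof is engineered precisely to sidestep this. It starts from a cork $(W,\tau)$ already known to be nontrivial (e.g.\ the Mazur cork of Akbulut), and forms $C$ as the boundary sum of $W$ with \emph{two} copies of $B$ attached at points of $\partial W$ exchanged by $\tau$. Then $\tau$ extends tautologically to an involution $\tilde\tau$ of $\partial C$ swapping the two copies of $B$; nontriviality of $(C,\tilde\tau)$ is inherited from $(W,\tau)$ by embedding $W$ in a closed manifold $X$ where the $\tau$-twist changes the smooth structure and placing the two copies of $B$ (which embed in a $4$-ball) disjointly in $X-W$, so that the $\tilde\tau$-twist on $C$ has the same effect as the $\tau$-twist on $W$. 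Finally, $C$ is not Stein with either orientation because of Eliashberg's theorem that a boundary sum is Stein if and only if each summand is, together with Theorem \ref{AKthm} applied to $\pm B$. So the roles are: Theorem \ref{AKthm} kills Steinness of the summand, a \emph{known} cork supplies the exotic twisting, and Eliashberg's decomposition theorem transfers the non-Stein property to the sum --- no claim about $B$ being a cork is ever needed. Your approach, if it could be completed, would yield a stronger statement (a non-Stein cork with irreducible boundary), but as written its key input is unavailable.
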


Recall that a {\it (nontrivial) cork} is a pair $(C,g)$ where $C$ is a contractible 4-manifold and $g$ is a diffeomorphism of $\partial C$ that extends across $C$ as a homeomorphism but not as a diffeomorphism; see \cite{AKMR} for a recent discussion. Often $g$ is required to be an involution, and we arrange that here.

\begin{proof}[Proof of Theorem \ref{corkthm}] Let $(W,\tau)$ be any nontrivial cork with boundary involution $\tau$, for example we can take $W$ to be the Mazur manifold as in \cite{akbulut91}. Let $C$ be the 4-manifold obtained by the boundary sum of $W$ with two copies of the Akbulut-Kirby manifold $B$, where the two factors are summed at points exchanged by $\tau$. We can extend $\tau$ in an obvious way to an involution $\tilde\tau$ of $C$ exchanging the copies of $B$; we claim this is nontrivial. To see this, we can first embed $W$ in some 4-manifold $X$ such that a twist along $W$ by $\tau$ changes the smooth structure of $X$ (a closed manifold $X$ with this property is contained in \cite{akbulut91}, or see \cite[Section 9.3]{GS}). Since the contractible $B$ embeds in a 4-ball we can trivially find two disjoint copies of $B$ in $X-W$, and form the boundary sum using thickened arcs between each $B$ and $W$. Hence $C$ embeds in $X$, and replacing $C$ using the involution $\tilde\tau$ clearly has the same effect on $X$ as the original twist along $W$. This shows $C$ is a nontrivial cork.

To see $C$ is not Stein, we appeal to a result of Eliashberg \cite{eliashberg89} that implies that a 4-manifold given as a boundary sum admits a Stein structure if and only if each summand does; since $\pm B$ is not Stein neither is $\pm C$.
\end{proof}

\subsection*{Other remarks and examples.} 

With regard to Gompf's conjecture on pseudoconvex embeddings of Brieskorn homology spheres, our methods do not extend easily to other known families of Brieskorn spheres that embed smoothly in $\cee^2$ (the particular difficulty is the classification of tight contact structures on these manifolds). However, if we consider the broader class of Seifert fibered rational homology spheres, then classification results due to various authors \cite{T16,ghiggini08} rule out the existence of Stein structures on several infinite families of rational homology balls, at least with one orientation. For example, the Seifert manifold $M(-2;\frac{1}{p}, \frac{p-1}{p},\frac{p-1}{p})$ bounds a rational homology ball $X_p$ by work of Casson-Harer \cite{cassonharer}, and supports Stein fillable contact structures. Since none of these structures have $\theta=-2$, however, it follows that $X_p$ cannot be Stein with the given orientation (see \cite[Theorem 1.3]{ghiggini08} and \cite[Theorem 1.1]{Lisca3}). One can then obtain rational homology balls that are not Stein with either orientation, by forming a boundary connected sum of $X_p$ with $-X_p$ and appealing to Eliashberg's decomposition theorem for Stein fillings mentioned above \cite{eliashberg89}. However, if one is given a rational homology ball with irreducible boundary and wishes to rule out Stein structures with either orientation, one must understand contact structures on the boundary with either orientation. As pointed out to the authors by Luke Williams, the lens space $L(25,7)$ is the boundary of a rational homology ball (see Roberts \cite{roberts06}, for example), and no tight contact structures have $\theta = -2$ by an easy calculation using the classification of tight contact structures on lens spaces due to Honda \cite{H000} and Giroux \cite{girouxinvent}. Since $L(25,7) \cong -L(25,7)$ the same is true with the opposite orientation.

\subsection*{Acknowledgements} The authors wish to thank Bob Gompf for reminding us of his conjecture in \cite{gompf13}, Paul Melvin and Hannah Schwartz for pointing out Theorem \ref{corkthm} and its proof, and Alex Moody and Luke Williams for interesting conversations. We especially thank {\c{C}a\`{g}r\i} Karakurt for helping us find an error in an earlier version of this paper. The first author was supported in part by NSF grant DMS-1309212 and a grant from the Simons Foundation (523795, TM). The second author was supported in part by an AMS-Simons travel grant.

\section{Contact structures on $\Sigma(2,3,6m+1)$} \label{contactsec}

Here we describe the arguments leading to the classification result Theorem \ref{thm3} for contact structures on the Brieskorn manifolds we are considering. We consider the standard orientation of $\Sigma(2,3,6m+1)$, i.e., the manifold is oriented as the link of the complex surface singularity $x^2 + y^3 + z^{6m+1} =0$. The link is a Seifert fibered space over the 2-sphere, with three singular fibers having multiplicities 2, 3, and $6m+1$. From this one can obtain a surgery description for $\Sigma(2,3,6m+1)$ as follows. First one determines integers $r,s,t$ such that 
\[
3(6m+1)r + 2(6m+1)s + 2\cdot 3t = 1,
\]
for example $r=1$, $s = -1$ and $t = -m$. Then $\Sigma(2,3,6m+1)$ is described by performing surgery on a zero-framed unknot in $S^3$, as well as on three meridians with coefficients $2/r$, $3/s$, and $(6m+1)/t$ (see Figure \ref{anothersur}). 

\begin{figure}[h!]
\begin{center}
 \includegraphics[width=11cm]{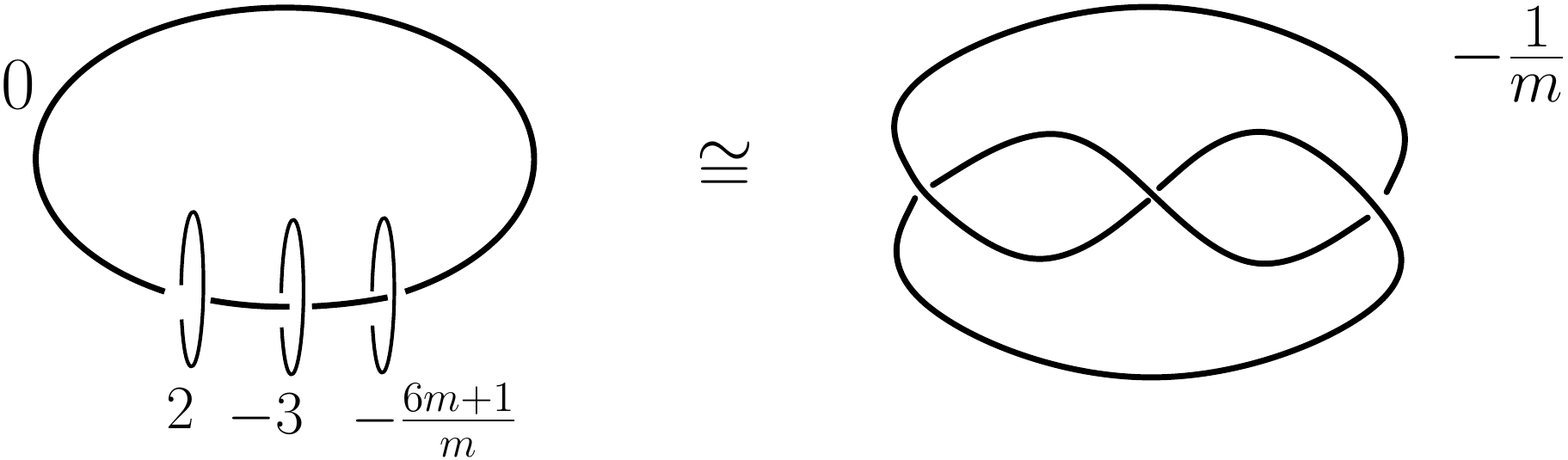}
 \caption{Surgery description of $\Sigma(2,3,6m+1)$.}
  \label{anothersur}
\end{center}
\end{figure}

\begin{lemma}\label{class1}
For $m\geq 1$, the manifold $\Sigma(2,3,6m+1)$  supports two non-isotopic, Stein fillable contact structures that are contactomorphic and have $\theta = -2$.
\end{lemma}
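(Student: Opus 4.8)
The plan is to realize both contact structures by Legendrian (Stein) surgery on the diagram of Figure~\ref{anothersur} and to separate them with the \OS contact invariant. First I would convert the rational surgery presentation into a Legendrian one. Applying standard Kirby calculus (Rolfsen twists, slam-dunks, blow-ups) to the $0$-framed unknot together with the three meridians of coefficients $2$, $-3$ and $-(6m+1)/m$, one reaches a surgery diagram on a link $L=L_1\cup\cdots\cup L_k\subset S^3$ with every coefficient at most $-1$; equivalently one passes to the minimal negative-definite plumbing of $\Sigma(2,3,6m+1)$, whose vertices are realized by a tree of Legendrian unknots with surgery framing $\tb(L_i)-1$. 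By Eliashberg's characterization of Stein domains \cite{EliashbergStein}, every contact structure obtained from the standard tight $(S^3,\xi_{std})$ by Legendrian surgery along such a diagram is Stein fillable, with explicit filling $X$ built from a single $0$-handle and one $2$-handle per component.

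Next I would use the freedom in the rotation numbers $\rot(L_i)$. A component with framing $-2$ (so $\tb=-1$) forces $\rot=0$, but a component with more negative framing admits nonzero rotation numbers of the appropriate parity; for such a component I take the two conjugate assignments $\rrr_\pm=(\dots,\pm a,\dots)$ with opposite rotation numbers and all other entries zero, defining two Stein fillable contact structures $\xi_+,\xi_-$. Using the formula $\theta=c_1^2(X,J)-3\sigma(X)-2\chi(X)$ from the introduction, with $\langle c_1(X,J_\pm),\Sigma_i\rangle=\rot(L_i)$ and $c_1^2=\rrr_\pm^{T}Q^{-1}\rrr_\pm$ for the linking matrix $Q$, I would check that this choice yields $\theta(\xi_+)=\theta(\xi_-)=-2$; the two values agree automatically because $\rrr_+$ and $\rrr_-$ differ by a global sign. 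The contactomorphism $\xi_+\cong\xi_-$ is then immediate: $\rrr_+$ and $\rrr_-$ describe complex-conjugate Stein structures on the same smooth $X$, and complex conjugation is an orientation-preserving diffeomorphism of $X$ restricting to a contactomorphism on the boundary.

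The hard part is proving that $\xi_+$ and $\xi_-$ are not isotopic. Because $\Sigma(2,3,6m+1)$ is a homology sphere, the common value $\theta=-2$ already makes them homotopic as plane fields, so no homotopy-theoretic obstruction can separate them. Here I would compute the \OS contact classes $c(\xi_\pm)\in\hfhat(-\Sigma(2,3,6m+1))$ directly from the two Legendrian surgery diagrams, either via the naturality maps induced by the fillings $(X,J_\pm)$, whose first Chern classes $c_1(X,J_\pm)=\pm\,\mathrm{PD}(\dots)$ are genuinely distinct in $H^2(X;\zee)$, or via the surgery-diagram algorithm employed by Ghiggini and Van Horn-Morris \cite{GVHM}. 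Since the contact class is an isotopy invariant, it suffices to show $c(\xi_+)\neq c(\xi_-)$; there is no tension with the contactomorphism above, which merely acts nontrivially on $\hfhat$ and carries one class to the other. Both classes are nonzero by Stein fillability and lie in the single grading of $\hfhat$ pinned down by $\theta=-2$, so the task reduces to exhibiting rank at least two in that grading and verifying that the two classes are distinct. This last Heegaard Floer computation for the Brieskorn sphere is the crux of the argument, and is exactly the point at which the adaptation (carried out in \cite{T16}) of the \OS-theoretic technique of \cite{GVHM} does the essential work; the Stein fillability, the value $\theta=-2$, and the contactomorphism are then formal consequences of the surgery picture assembled above.
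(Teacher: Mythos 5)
Your construction collapses at its first step. The minimal negative-definite plumbing bounded by $\Sigma(2,3,6m+1)$ is the star-shaped graph with central vertex of weight $-1$ and legs $[-2]$, $[-3]$, $[-7,-2,\dots,-2]$ (this follows from the Seifert invariants $M(-1;\frac12,\frac13,\frac{m}{6m+1})$). A Legendrian unknot has maximal Thurston--Bennequin invariant $-1$, so the only framings realizable as $\tb(L_i)-1$ on unknotted components are $\le -2$; the central $-1$-framed vertex therefore admits no such Legendrian realization, the ``tree of Legendrian unknots'' you propose does not exist, and Eliashberg's theorem cannot be invoked for this diagram. (This is exactly why not every negative-definite plumbing carries a Stein structure.) The paper avoids plumbings entirely: a left Rolfsen twist and three blowdowns turn Figure \ref{anothersur} into $-1/m$-surgery on the right-handed trefoil, whose maximal $\tb$ is $+1$, and the Ding--Geiges--Stipsicz algorithm \cite{DGS} converts that into Legendrian surgery on $m$ parallel copies of a once-stabilized trefoil; the two choices of stabilization give the two contact structures, every contact coefficient is $-1$ so the diagrams describe Stein $2$-handlebodies, and $c_1^2=-m$, $\sigma=-m$, $\chi=m+1$ yield $\theta=-2$. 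Note also that $\theta=-2$ is not automatic in your setup: for a filling with $k$ two-handles and negative definite intersection form one needs $c_1^2=-k$ exactly, a genuine constraint on the rotation numbers that you left as ``I would check.''

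The second gap is non-isotopy, which you never prove: you reduce it to distinctness of the contact classes $c(\xi_+)\neq c(\xi_-)$ in $\hfhat(-\Sigma(2,3,6m+1))$ and defer that computation to \cite{T16} and \cite{GVHM}, calling it the crux. But that is precisely the content the lemma requires, and it is not formal---non-isotopic contact structures can share contact invariants, so exhibiting ``rank at least two in the relevant grading'' would not suffice without an honest identification of the two classes, which you do not attempt. The paper needs no Floer theory here at all: the two Stein structures live on the \emph{same} smooth handlebody and their first Chern classes differ (they are opposite and nonzero, being given by the rotation numbers), so the theorem of Lisca--Mati\'c \cite[Theorem 1.2]{LM97} immediately implies the induced contact structures on the boundary are non-isotopic. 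Your contactomorphism argument is the one point that essentially matches the paper---the paper realizes it concretely as the revolution of the surgery diagram about a vertical axis, which negates all rotation numbers---but the two substantive claims, Stein fillability via the plumbing and non-isotopy via Floer homology, are respectively false as stated and left unproved.
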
  

\begin{proof} Perform a left Rolfsen twist on the 2-framed meridian in Figure \ref{anothersur}, so the coefficient on that component becomes $-2$ while the 0-framed circle now gets framing $-1$. Now we can perform three successive blowdowns (handleslides and deletions of $-1$ framed unknots) to obtain the right hand side of Figure \ref{anothersur}, which exhibits $\Sigma(2,3,6m+1)$ as the result of $-1/m$ surgery on the right trefoil. To produce a Stein manifold with boundary $\Sigma(2,3,6m+1)$, we realize the right trefoil in the standard way as a Legendrian in $S^3$ having Thurston-Bennequin invariant $+1$, so that the desired framing is $-\frac{m+1}{m}$ with respect to the contact framing. There is a standard way to describe a rational contact surgery by a contact surgery along a Legendrian link with all framings $\pm 1$ due to Ding-Geiges-Stipsicz \cite{DGS}; in this case the algorithm describes $-\frac{m+1}{m}$ surgery as Legendrian surgery on the link given by $m$ parallel copies of a once-stabilized right trefoil (see Figure \ref{contact}). The two diagrams on the right of Figure \ref{contact} arise from the two choices for the stabilization; there is an obvious contactomorphism relating the resulting contact structures, induced by revolution around a vertical axis. 
\begin{figure}[h!]
\begin{center}
 \includegraphics[width=11cm]{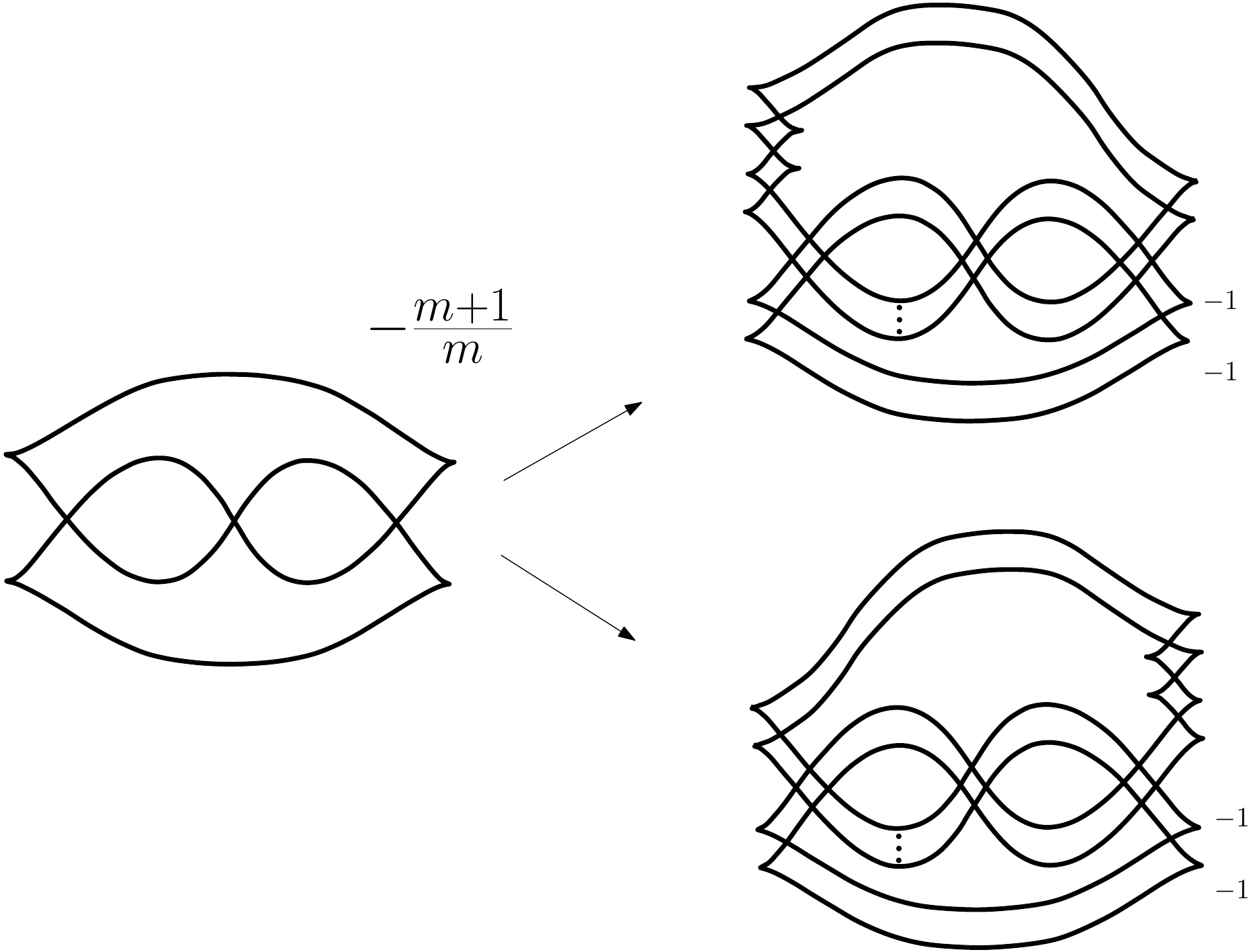}
 \caption{Two contactomorphic but not isotopic Stein fillable contact structures on $\Sigma(2,3,6m+1)$. The surgery coefficients are measured with respect to the contact framing; in each diagram on the right there are $m$ parallel pushoffs of the stabilized trefoil.}
  \label{contact}
\end{center}
\end{figure}

Since all surgeries have contact coefficient $-1$, the same diagrams describe Stein 2-handlebodies filling contact structures on $\Sigma(2,3,6m+1)$, and the Chern classes of the corresponding Stein structures evaluate on the handles corresponding to the Legendrians in the diagram as the rotation number (after fixing orientations for the Legendrians). The two Chern classes obviously differ by a sign, and by \cite[Theorem 1.2]{LM97} must induce non-isotopic contact structures on the boundary.

One easily calculates that for each of the two Stein structures just described the Chern class has square $-m$, and the claimed value for $\theta$ is then immediate.
\end{proof}

Having exhibited two Stein fillable (and in particular, tight) contact structures on $\Sigma(2,3,6m+1)$, the next result completes the proof of Theorem \ref{thm3}.

\begin{proposition} The Brieskorn manifold $\Sigma(2,3,6m+1)$ admits at most two tight contact structures.
\end{proposition}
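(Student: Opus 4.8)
The plan is to bound the number of tight contact structures from above by $2$ using the standard convex-surface / Legendrian-surgery technology for small Seifert fibered spaces, following the template of Ghiggini--Van Horn-Morris \cite{GVHM} and its adaptation in \cite{T16}. First I would replace the surgery description of Figure \ref{anothersur} by a standard \emph{linear plumbing} or negative-definite Seifert presentation of $\Sigma(2,3,6m+1)$: since this manifold is a small Seifert fibered space over $S^2$ with three singular fibers, one expects (after the Rolfsen twists and blowdowns already carried out in Lemma \ref{class1}) a description as surgery on a star-shaped plumbing graph, which is what the convex-surface machinery requires. The goal is to present $\Sigma(2,3,6m+1)$ so that the three Seifert invariants can be read off and the continued-fraction expansions of the singular-fiber framings become available.

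Next I would pass to the complement of a neighborhood of the singular fibers and a regular fiber, decomposing the manifold into solid tori and a thickened torus over the base. On each boundary torus one makes the characteristic foliation convex, measures the dividing curves by their slopes, and appeals to Honda's classification \cite{H000} of tight contact structures on solid tori and on $T^2\times I$ to count the possible \emph{discretizations}. The key arithmetic input is the continued-fraction expansion of the surgery coefficients: the number of tight structures is controlled by a product of the form $\prod(|a_i|-1)$ over the coefficients $a_i$ in the expansion, together with the constraint coming from twisting in the base direction. For $\Sigma(2,3,6m+1)$ the relevant expansions are short---the multiplicities $2$ and $3$ contribute rigid (unique) pieces, and only the $-1/m$ (equivalently $-\tfrac{m+1}{m}$) surgery on the trefoil carries genuine multiplicity---so after accounting for the Legendrian stabilization choices the raw count collapses to exactly the two structures exhibited in Lemma \ref{class1}.

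The main obstacle, and the step requiring the most care, is ruling out the a priori larger count that the naive convex-surface bookkeeping produces. Typically one overcounts because different discretizations (different signs of stabilization, or different basic-slice decompositions of the $T^2\times I$ piece) yield isotopic contact structures; eliminating this redundancy is exactly where the Ghiggini--Van Horn-Morris argument does its real work. I would therefore use the \emph{shuffling} and \emph{continued-fraction} lemmas from \cite{H000, GVHM} to identify which sign sequences are interchangeable, and show that all but two of them are forced to coincide up to isotopy by a sequence of bypass moves. A secondary subtlety is verifying that no tight structure arises with maximal twisting in the fiber direction that escapes the solid-torus decomposition; this is handled by showing that any such structure would have to be overtwisted, or would reduce to one already counted. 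Granting these identifications, the total is at most $2$, which together with Lemma \ref{class1} completes the proof of Theorem \ref{thm3}.
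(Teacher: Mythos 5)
Your proposal identifies the correct framework---convex surface theory, solid-torus decomposition of the Seifert manifold, and Honda's classification results---which is indeed what the paper uses. But the two steps where the actual content lies are missing or mischaracterized. First, the claim that the multiplicity-$2$ and multiplicity-$3$ pieces are ``rigid'' and that only the third singular fiber carries multiplicity is not something you can read off from continued fractions; it requires the explicit normalization of the twisting numbers, carried out in the paper by repeated use of the imbalance principle, the Twist Number Lemma \cite[Lemma 4.4]{H000}, and the Edge Rounding Lemma, to show every tight structure can be isotoped so that $V_1,V_2$ are standard neighborhoods with $n_1=n_2=-2$ while $V_3$ acquires boundary slope $-\frac{m+1}{m}=[-2,\dots,-2]$; only then does \cite[Theorem 2.3]{H000} give the factor of $2$ (from $V_3$) times $1$ (from the complement). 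Second, and more seriously, the point you call a ``secondary subtlety''---ruling out tight structures with a zero-twisting vertical Legendrian curve---is in fact the crux, and your proposed handling of it (``any such structure would have to be overtwisted, or would reduce to one already counted'') is circular: that dichotomy is exactly what must be proved. The paper derives the needed contradiction, in both branches of its case analysis, from a specific nontrivial input: the maximal twisting number of any tight contact structure on $M(-\frac12,\frac13,r)$ is negative for $r\le\frac15$ \cite[Lemma 4.11]{paolostef}. Without naming and invoking such a result, the upper bound does not close.

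A related conceptual issue: the product formula $\prod(|a_i|-1)$ you propose as the ``raw count'' is a valid upper bound for tight structures only on small Seifert fibered spaces with $e_0\le -3$ (or $e_0\ge 0$), where the classification reduces to Legendrian surgery on a negative-definite star-shaped plumbing. Here $\Sigma(2,3,6m+1)=M(-1;\frac12,\frac13,\frac{m}{6m+1})$ has $e_0=-1$, precisely the case excluded from those general theorems; this is why the paper (following the Ghiggini--Sch\"onenberger and Ghiggini--Van Horn-Morris template) must run the full bypass analysis rather than quote a plumbing count. So while your plan points at the right toolbox, as written it would not assemble into a proof: the normalization of slopes and the negative-maximal-twisting input must both be supplied explicitly.
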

\begin{proof}
The proof is presented assuming the reader is familiar with Giroux's convex surface theory \cite{G91} and Honda's  bypass technology \cite{H000}. The argument parallels the one carried out in Ghiggini-Sch\"onenberger \cite{paolostef}, and the reader is referred to that work for an excellent exposition.

 We first specify our framing convention. The manifold $M=\Sigma(2,3,6m+1)$ is, as explained above, a Seifert fibered space over $S^2$ with three singular fibers $F_i$, $i=1,2,3$.  Let $V_i \cong D^2\times S^1$ denote a solid torus neighborhood of $F_i$, and identify $\partial V_i\cong \mathbb{R}^2/\mathbb{Z}^2$ such that $(1,0)^T$ is the direction of the meridian.  We have $M\setminus V_{1}\cup V_{2}\cup V_{3}\cong \Sigma \times S^1$ where $\Sigma$ is a pair of pants, and we choose an identification $-\partial(M \setminus V_{i})\cong \mathbb{R}^2/\mathbb{Z}^2$ such that $(0,1)^{T}$ is the direction of the $S^1$ fiber and $(1,0)^{T}$ is the direction given by $-\partial({pt.} \times \Sigma)$. We then obtain $M$ as $M\cong(\Sigma \times S^1)\cup_{(A_{1}\cup A_{2}\cup A_{3})} (V_{1}\cup V_{2}\cup V_{3})$ where the attaching maps $A_{i}=\partial V_{i}\rightarrow -\partial(\Sigma\times S^1)_{i}$ are given by

\[
A_1=\left( \begin{array}{cc}
2&-1\\ 1&0
\end{array} \right), \qquad A_2=\left( \begin{array}{cc}
3 & 1 \\-1 & 0
\end{array} \right), \qquad A_3=\left( \begin{array}{cc}
6m+1 &  6m-5\\ -m & -m+1 
\end{array} \right).
\]
 
If $M$ is equipped with a contact structure, after isotopy  we can make each singular fiber $F_i$ Legendrian and take $V_{i}$ to be its standard neighborhood, with $\textrm{slope}(\Gamma_{\partial V_{i}})=\frac{1}{n_i}$ for some $n_i<0$. Here $\Gamma_{\partial V_i}$ indicates the dividing set on the torus $\partial V_i$, after a perturbation to make $\partial V_i$ convex, and ``slope'' refers to the trivialization described above. Thus the slope $1/n_i$ on $\partial V_i$ corresponds to the vector $(n_i, 1)^T$.  By Giroux's flexibility theorem \cite{G91}, we can isotope $V_i$ so that the ruling curves on $-\partial (M\setminus V_i)$ have infinite slope: we will call such curves {\it vertical} curves. (Note that the trivialization to be used on the torus is implicit in the notation $-\partial(M\setminus V_i)$, instead of $\partial V_i$.) In the following we often consider annuli with Legendrian boundary along the vertical curves on $-\partial(M\setminus V_i)$ and $-\partial(M\setminus V_j)$ for $i\neq j=1,2,3$. Such an annulus will also be called vertical.

When measured in $-\partial(M\setminus V_i)$ (that is, after applying the maps $A_i$ to the vectors $(n_i, 1)^T$), the slopes corresponding to $1/n_i$ are
\[
s_1=\frac{n_1}{2n_{1}-1},~ s_2=-\frac{n_{2}}{3n_{2}+1},~~s_3=-\frac{mn_{3}+m-1}{(6m+1)n_{3}+6m-5}.
\]

We now explain that by finding enough bypasses we can thicken $V_i$ such that the twisting numbers $n_i$ of the singular fibers can be increased to $n_1=n_2=-2$ and $n_3=0$. Let $\mathcal{A}$ be a vertical annulus between $V_1$ and $V_2$,  which we can assume is convex. There are two cases to analyze based on the slopes of tori that $\mathcal{A}$ connects and the dividing set configuration of $\mathcal{A}$. Observe that the number of endpoints of $\Gamma_\A$ on the boundary corresponding to $V_i$ is equal to the number of intersections between the relevant boundary of $\A$ with the dividing set $\Gamma_{-\partial (M-V_i)}$. Since the slope of $\partial \A$ is infinite and the dividing set on each torus contains two components, this intersection is equal to twice the denominator of the corresponding slope $s_i$.

\vspace{2mm}

{\it Case 1:} If $2n_{1}-1\neq 3n_{2}+1$, then the ``imbalance principle'' applies: the dividing set of $\mathcal{A}$ has at least one boundary parallel arc, which bounds a bypass disc. This bypass might allow an increase of the corresponding twisting number $n_1$ or $n_2$. More precisely, since the ruling slopes on $\partial V_1$ and $\partial V_2$ are $2$ and $-3$, respectively (obtained by transferring the infinite ruling slope on $-\partial (M \setminus V_i)$ using the inverse of $A_i$), the Twist Number Lemma from \cite[Lemma~$4.4$]{H000} says that a bypass found in this way allows us to increase $n_1$ and $n_2$ incrementally up to a maximum of $n_1=0$ and $n_2=-1$ (so long as we remain in Case 1).

\vspace{2mm}

{\it Case 2:} If $2n_{1}-1=3n_{2}+1$ and the dividing set of $A$ does not have any boundary parallel arc, then the dividing curves on $\mathcal{A}$ run across from $-\partial(M\setminus V_1)$ to $-\partial(M\setminus V_2)$. We can cut along $\mathcal{A}$ and round the corners to get a smooth manifold $M\setminus (V_1\cup V_2\cup \mathcal{A})$ such that $\partial(M\setminus (V_1\cup V_2\cup \mathcal{A}))$ is smoothly isotopic to $\partial(M\setminus V_3)$. Moreover, by the Edge Rounding Lemma from \cite[Lemma~$3.1$]{H000} we compute its slope as

\[
s(\Gamma_{\partial(M\setminus V_1 \cup V_2 \cup \mathcal{A})})=\frac{n_1}{2n_{1}-1} - \frac{n_{2}}{3n_{2}+1} - \frac{1}{2n_{1}-1}=\frac{n_{1}-1}{6n_{1}-3}.
\]    
 
The corresponding slope on $\partial V_3$ is obtained by first reversing the sign (to account for orientation), and applying the gluing map. We get

\[
s(\Gamma_{\partial V_3}) = A^{-1}_{3}(6n_{1}-3,-n_{1}+1)^T=\frac{-n_1+3m+1}{n_1-3m+2}.
\]

This slope is less than $-1$ for all $n_1\leq 0$ and $m\geq 1$. So, by \cite[Theorem $4.16$]{H000} (c.f. \cite[Lemma 3.16]{etnyrehonda2000}), for any negative $n_1$ we can find a convex neighborhood $V'_{3}\subset V_{3}$ of the singular fiber $F_3$ such that $s(\Gamma_{\partial V'_{3}})=-1$. When measured with respect to $-\partial(M\setminus V'_3)$, the slope becomes $-\frac{1}{6}$ which corresponds to $n_3=-1$. We now take a vertical annulus between $V_{1}$ and $V'_{3}$, and (comparing slope denominators) observe that $|2n_{1}-1|>6$ as long as $n_{1}<-2$. Therefore by the imbalance principle, the vertical annulus will have a bypass on $V_{1}$ side, and by the Twist Number Lemma we can repeatedly attach bypasses to $V_1$ to increase the twisting $n_{1}$ to $-2$. Similarly we can increase the twisting $n_{2}$ to $-2$. So, slopes in the coordinates of $-\partial(M\setminus V_i)$ become $s_1=\frac{2}{5}, s_{2}=-\frac{2}{5}$ and $s_{3}=-\frac{1}{6}$. 

We claim that a vertical convex annulus between $V_1$ and $V_2$ will now have no boundary parallel arcs in its dividing set, if the contact structure under consideration is tight. To see this, suppose there is a bypass on one side or the other: then since each boundary of $\A$ contains the same number of endpoints of $\Gamma_\A$  there must be a bypass on each side. Attaching these to $V_1$ and $V_2$ results in thickened convex neighborhoods that we denote with the same symbol, but which now have slopes $s_1 = s(\Gamma_{-\partial(M-V_1)}) = \frac{1}{3}$ and $s_2 = s(\Gamma_{-\partial(M-V_1)}) = -\frac{1}{2}$ (c.f. the ``bypass attachment lemma,'' \cite[Theorem 2.4]{paolostef} or \cite[Lemma 3.15]{H000}). Since the denominator of the first slope is greater than that of the second there is necessarily another bypass on the $V_1$ side; attach that to find a further thickening with slope $s_1 = 0$. Now the denominator of $s_2$ is larger, so we can attach a bypass to $V_2$ and find a thickening with slope $s_2 = -1$. At this point the denominators agree again, so either there are no further bypasses or there is at least one on each side. In the latter case we can thicken again to get $s_1=\infty$ (and $s_2=\infty$). In particular a vertical curve (i.e., a Legendrian regular fiber) on $-\partial(M\setminus V_1)$ will have twisting $0$. But by \cite[Lemma $4.11$]{paolostef} this is impossible as the maximal twisting number of any tight contact structure on a Seifert manifold $M(-\frac{1}{2}, \frac{1}{3}, r)$ is negative, for any $r\leq \frac{1}{5}$ (here the Seifert invariants $-\frac{1}{2}, \frac{1}{3}, r$ are the negative reciprocals of the surgery coefficients on the meridians in the diagram of Figure \ref{anothersur}). On the other hand, if the vertical annulus now does not carry any bypasses and $s_1=0$, $s_2=-1$ then we can cut along the vertical annulus and round the edges to get a torus with slope $0$. When measured with respect to $\partial V_3$, this slopes becomes $-\frac{m}{m-1}$. This negative quantity is less than $-\frac{6m+1}{6m-5}$ for any $m\geq 1$. Hence there exists a convex torus $V'_3$ in $V_3$ with slope $-\frac{6m+1}{6m-5}$, which becomes $\infty$ when measured with respect to $-\partial(M\setminus V'_3)$, yielding the same contradiction that there is a vertical Legendrian curve with zero twisting number.

So a vertical annulus between $V_1$ and $V_2$ cannot have boundary parallel arcs in its dividing set, and without loss of generality we can then assume $\Gamma_\A$ consists of horizontal arcs. Once again, we cut along $\mathcal A$ and round the corners of $(M\setminus V_1 \cup V_2 \cup\mathcal A)$. Then as before $\partial(M\setminus V_1 \cup V_2 \cup \mathcal A)$ is smoothly isotopic to $\partial(M\setminus V_3)$ and by the Edge Rounding Lemma has slope $\frac{2}{5}-\frac{2}{5}-\frac{1}{5}=-\frac{1}{5}$. Recall $s_1=\frac{2}{5}$ and $s_2=-\frac{2}{5}$ correspond to slopes $\frac{1}{n_1}=-\frac{1}{2}$ and $\frac{1}{n_2}=-\frac{1}{2}$, respectively. In particular $V_1$ and $V_2$ are the standard neighborhoods of Legendrian representatives of the corresponding singular fibers, and hence each carries unique tight contact structure up to isotopy (convex solid tori with dividing set consisting of two curves of slope equal to the reciprocal of a negative integer carry a unique tight contact structure; for example see \cite[Theorem 6.7]{etnyreintrolectures}). On the other hand, the slope $s_3 = -\frac{1}{5}$ corresponds on $\partial V_3$ to slope $-\frac{m+1}{m} = [-2,\ldots, -2]$, where the notation indicates the continued fraction expansion of the slope and includes $m$ copies of $-2$. It follows from the classification of tight contact structures on solid tori \cite[Theorem 2.3]{H000} that $V_3$ admits exactly two tight contact structures extending this boundary condition. We can decompose $M$ as $(M\setminus V_3)\cup V_3$ where $M\setminus V_3$ is made of $V_1$, $V_2$ and a neighborhood of the annulus $\mathcal A$. As the dividing set of $\mathcal A$ uniquely determines a tight contact structure in the neighborhood of $\A$, and $V_1$, $V_2$ each carries a unique tight contact structure, we infer that $M\setminus V_3$ has unique tight contact structure relative to its boundary $\partial (M\setminus V_3)=\partial V_3$. On the other hand,  as explained above $V_3$ has two tight contact structures satisfying the same boundary condition. So, $M$ has at most two tight contact structures.


\end{proof}

\section{Proof of Theorem \ref{AKthm}}\label{HFsec}

In this section we prove that the particular contractible 4-manifold $B$ with $\partial B = \Sigma(2,3,13)$ constructed by Akbulut and Kirby in \cite{AKmazur} does not admit a Stein structure with either orientation. Our argument is specific to this 4-manifold, but relies on constructions that we phrase generally for as long as possible. Note that $-B$ admits no Stein structure by Theorem \ref{mainthm}; hence to prove Theorem \ref{AKthm} it suffices to show $B$ has no Stein structure either.

First, recall that the Milnor fiber of the Brieskorn singularity $z_1^2 + z_2^3 + z_3^{6m+1} = 0$ admits a natural (smooth) compactification whose divisor at infinity is described topologically by the plumbing graph of Figure \ref{capplumbingfig} (cf. Ebeling-Okonek \cite[Section 3]{EO}). Let $P_m$ be a neighborhood of this divisor and $M_m$ the Milnor fiber, so the compactification is a smooth complex surface $Z_m$ diffeomorphic to $M_m\cup_\partial P_m$ (in fact, up to diffeomorphism $Z_m$ is nothing but the elliptic surface $E(m+1)$). Observe that $P_m$ contains a neighborhood of an $\widetilde{E}_8$ singular fiber in an elliptic fibration (the union of the $-2$ spheres), and in particular contains a complex curve $F$ of genus 1 and self-intersection 0 corresponding to a smooth fiber of this fibration. It follows from the adjunction formula or the description of $Z_m$ as an elliptic surface that the canonical class of $Z_m$ is Poincar\'e dual to the class $(m-1)[F]$.

\begin{figure}[t]
\def\svgwidth{4in}
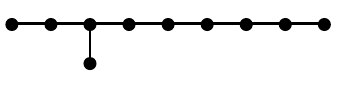
\caption{\label{capplumbingfig}}
\end{figure}

\begin{lemma}\label{nonrationallemma} Suppose $B_m$ is an acyclic Stein manifold with boundary $\partial B_m = \Sigma(2,3,6m+1)$. Then the smooth 4-manifold $Y_m = B_m\cup_\partial P_m$ obtained from $Z_m$ by replacing the Milnor fiber with $B_m$ admits a symplectic structure whose canonical class is Poincar\'e dual to (the image in $Y_m$ of) the class $(m-1)[F]$. If $m$ is even and at least 2, then $Y_m$ is homeomorphic but not diffeomorphic to the rational elliptic surface $E(1)$.
\end{lemma}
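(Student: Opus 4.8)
The plan is to construct the symplectic structure on $Y_m$ by a symplectic cut-and-paste, read off its homeomorphism type from the intersection form of the cap $P_m$, and reserve gauge-theoretic input (via the symplectic Kodaira dimension) for the smooth statement. First I would equip $Z_m$ with a K\"ahler form and arrange that the affine piece $M_m$ is a Stein subdomain; then its convex boundary carries the canonical (Milnor-fillable) contact structure $\xi_0$ on $\Sigma(2,3,6m+1)$, and $P_m = Z_m\setminus\mathrm{int}(M_m)$ becomes a strong concave symplectic filling of $(\Sigma(2,3,6m+1),\xi_0)$. A Stein structure on $B_m$ induces on $\partial B_m$ a Stein-fillable, hence tight, contact structure $\xi_B$; by Theorem \ref{thm3} all tight contact structures on $\Sigma(2,3,6m+1)$ are mutually contactomorphic, so $\xi_B$ is contactomorphic to $\xi_0$. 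Gluing the convex end of $B_m$ to the concave end of $P_m$ along such a contactomorphism, matching the two symplectization collars, produces a closed symplectic manifold $(Y_m,\omega)$. The role of Theorem \ref{thm3} is essential here: it is precisely what allows an \emph{arbitrary} Stein filling $B_m$ to be capped by the fixed piece $P_m$.

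To identify the canonical class I would use that $\omega$ restricts on $P_m\subset Y_m$ to (a deformation of) the K\"ahler form of $P_m\subset Z_m$, so the compatible almost-complex structures agree there and $c_1(Y_m,J)|_{P_m} = c_1(Z_m,J)|_{P_m}$. Since $B_m$ is acyclic and $\Sigma(2,3,6m+1)$ is a homology sphere, the inclusion gives an isomorphism $H_2(P_m)\cong H_2(Y_m)$ under which intersection numbers (realized by cycles inside $P_m$) are unchanged. Pairing $K_{Y_m}$ against these generators and comparing with the given $K_{Z_m}=\mathrm{PD}((m-1)[F])$ then forces $\langle K_{Y_m},\sigma\rangle = (m-1)[F]\cdot\sigma$ for all $\sigma\in H_2(Y_m)$, whence $K_{Y_m}=\mathrm{PD}((m-1)[F])$ by nondegeneracy of the intersection form.

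For the topological identification when $m$ is even I would first check that $Y_m$ is simply connected: $P_m$ is a tree plumbing, hence simply connected, while the Stein handle structure on $B_m$ (handles of index $\le 2$, so index $\ge 2$ when read off from the boundary) makes $\pi_1(\partial B_m)\to\pi_1(B_m)$ surjective, and van Kampen then collapses $\pi_1(Y_m)$ to the trivial group (so no simple-connectivity hypothesis on $B_m$ is needed). Next, because $\Sigma(2,3,6m+1)$ is a homology sphere, the intersection form on $H_2(P_m)\cong H_2(Y_m)$ is unimodular, and a direct computation from the plumbing graph splits it orthogonally as the negative definite $E_8$ lattice together with the block $\left(\begin{smallmatrix} 0 & 1 \\ 1 & -m-1 \end{smallmatrix}\right)$ spanned by the fiber class $[F]$ (the radical generator of the affine $\widetilde{E}_8$ block) and the $(-m-1)$-sphere $S$ attached at the end of the long arm. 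This form has rank $10$ and signature $-8$; for $m$ even the entry $-m-1$ is odd, so the form is odd and indefinite, hence diagonalizes as $\langle 1\rangle \oplus 9\langle -1\rangle$, the intersection form of $E(1)$. Freedman's classification of simply connected closed $4$-manifolds then yields a homeomorphism $Y_m\cong E(1)$.

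Finally, to see that $Y_m$ and $E(1)$ are not diffeomorphic I would invoke that the symplectic Kodaira dimension is a smooth invariant (T.-J.\ Li): since $K_{Y_m}\cdot[\omega] = (m-1)\int_F\omega > 0$ for $m\ge 2$, the manifold $Y_m$ is neither rational nor ruled and has nonnegative Kodaira dimension, whereas $E(1)$ is rational with Kodaira dimension $-\infty$. The main obstacle is the symplectic gluing step, where one must be sure that an \emph{arbitrary} Stein filling can be capped by $P_m$ to a closed symplectic manifold; this rests squarely on the contact-geometric input of Theorem \ref{thm3} to match $\xi_B$ with $\xi_0$ and on the compatibility of the symplectization collars. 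By contrast, the intersection-form computation is routine linear algebra, and the non-diffeomorphism is immediate once the canonical class is in hand.
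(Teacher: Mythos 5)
Your first three steps run essentially parallel to the paper's proof: the paper likewise arranges $M_m\subset Z_m$ as a convex symplectic piece so that $P_m$ is concave, invokes Theorem \ref{thm3} to get a contactomorphism $\partial B_m\to -\partial P_m$ along which to glue, reads the canonical class off the cap using acyclicity of $B_m$, deduces simple connectivity of $Y_m$ from the fact that a Stein $B_m$ has no 3-handles (so $\pi_1(\partial B_m)$ surjects onto $\pi_1(B_m)$), and applies Freedman to the odd indefinite form with $b^+=1$, $b^-=9$. Your explicit lattice splitting $E_8\oplus\left(\begin{smallmatrix}0&1\\1&-m-1\end{smallmatrix}\right)$ is a harmless elaboration of the paper's one-line assertion that the form of $Y_m$ agrees with that of $P_m$.

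The final step is where you diverge from the paper, and as written it has a genuine gap. The symplectic Kodaira dimension of a non-minimal symplectic 4-manifold is \emph{defined} through its minimal model, and the sign of $K_{Y_m}\cdot[\omega]$ on $Y_m$ itself is not a substitute: each blowup raises $K\cdot\omega$ by the area of the exceptional sphere, so positivity upstairs says nothing about the sign downstairs unless you first prove $(Y_m,\omega)$ is minimal, which you never address. Indeed the implication you assert is false as stated: the product K\"ahler form on $\Sigma_2\times S^2$ with the $S^2$ factor of larger area than the genus-2 factor has $K\cdot\omega>0$ yet is minimal ruled with $\kappa=-\infty$, and by Li--Liu's description of the symplectic cone, $\cee P^2\#10\,\cptwobar$ carries symplectic forms with $K\cdot\omega>0$ yet is rational. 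In your situation ruled models are excluded by simple connectivity, but the rational possibility is exactly what must be killed: you need to rule out that $(Y_m,\omega)$ is a symplectic blowup of $\cee P^2$ or $S^2\times S^2$, and minimality is not detectable by arithmetic alone (for $m=2$ the lattice does contain classes $E$ with $E^2=-1=K\cdot E$). Filling this hole requires Taubes/Li--Liu uniqueness of symplectic canonical classes on rational manifolds together with a light-cone (Cauchy--Schwarz) estimate on the $E(1)$ lattice---that is, precisely the Seiberg--Witten technology you were hoping to bypass. The paper instead argues directly: by Taubes, $SW^-(Y_m,\s_\omega)=1$ for the canonical \spinc structure; since $c_1(\s_\omega)\cdot[\omega]=(1-m)\int_F\omega<0$ this equals the small-perturbation invariant $SW^0(Y_m,\s_\omega)$, which by the light cone lemma is a diffeomorphism invariant for simply connected manifolds with $b^+=1$ and $b^-\leq 9$; and $SW^0\equiv 0$ on $E(1)$ because it admits a positive scalar curvature metric. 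You should either adopt that argument or supply the minimal-model analysis sketched above. Finally, contrary to your closing remark, the gluing step is the routine part; the delicate content of the lemma is exactly this last smooth distinction.
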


Strictly, the smooth type of $Y_m$ may depend on the choice of diffeomorphism $\partial B_m\to -\partial P_m$; the lemma is to be understood as asserting the existence of a diffeomorphism resulting in a manifold $Y_m$ with the stated properties. 

\begin{proof} Essentially by construction, we can arrange that $M_m\subset Z_m$ is symplectically embedded with convex boundary. Correspondingly, $\partial P_m$ is concave with respect to a suitable symplectic structure on $P_m$ (coming from the K\"ahler structure on $Z_m$; this also follows from general results such as \cite[Theorem 1.3]{li-mak}). By Theorem \ref{thm3}, there is a contactomorphism $\partial B_m\to -\partial P_m$ that we can use to obtain a symplectic structure on $Y_m$. Since the canonical class of $Z_m$ is supported in $P_m$ it is clear that $Y_m$ has the stated canonical class. 

For the homeomorphism classification of $Y_m$, observe that since $B_m$ is Stein it admits a handle decomposition without 3-handles. Turning this handle decomposition over, $B_m$ can be built from $\partial B_m$ by handles of index $\geq 2$, and in particular the fundamental group of $\partial B_m$ surjects onto that of $B_m$. It follows easily that $Y_m$ is simply connected; since $m$ is even the intersection form of $Y_m$ is odd with $b^+ =1$ and $b^- = 9$ (in fact the intersection form of $Y_m$ is the same as that of the plumbing $P_m$ in Figure \ref{capplumbingfig}). Hence Freedman's theorem implies $Y_m$ is homeomorphic to $E(1)$.

To distinguish the diffeomorphism type of $Y_m$ from that of $E(1)$, we turn to Seiberg-Witten invariants. Recall that given a Riemannian 4-manifold $Y$ with $b^+(Y) = 1$ and fixed orientation of the space $\mathcal{H}^+$ of self-dual harmonic 2-forms, a \spinc structure $\s$ has two Seiberg-Witten invariants $SW^+(Y,\s)$ and $SW^-(Y,\s)$ (that are independent of the Riemannian metric) and also a ``small-perturbation invariant'' $SW^0(Y,\s)$ given by
\[
SW^0(Y,\s) = \left\{\begin{array}{ll} SW^+(Y,\s) & \mbox{if $c_1(\s)\cup [\omega] >0$} \\
SW^-(Y,\s) & \mbox{if $c_1(\s)\cup[\omega] <0$}.\end{array}\right.
\]
Here $[\omega]$ is the cohomology class of a closed, nonzero, self-dual 2-form with respect to the metric on $Y$, which determines the given orientation of $\mathcal{H}^+$. We are following the conventions of Li and Liu \cite{liliuuniqueness}; see also Park \cite{park05} or Fintushel-Stern \cite{FSsixlectures} for expositions. (Note also that the case $c_1(\s)\cup[\omega]=0$ does not arise for \spinc structures having Seiberg-Witten moduli space of nonnegative dimension.)  When $Y$ is simply connected with $b^-(Y)\leq 9$, it follows from the light cone lemma that $SW^0(Y,\s)$ does not depend on the choice of metric used in its definition and is therefore a diffeomorphism invariant.

If $Y$ is symplectic, we can take $[\omega]$ to be the cohomology class of the symplectic form. Moreover, if $\s_\omega$ is the canonical \spinc structure on $(Y,\omega)$, having first Chern class $c_1(\s_\omega) = c_1(Y)$, then work of Taubes \cite{taubessymp} shows that $SW^-(Y, \s_\omega) = 1$. On the other hand, if $Y$ admits a metric of positive scalar curvature, in particular if $Y = E(1)$, then the small-perturbation invariants of $Y$ all vanish.

Turning to $Y_m$, observe that $c_1(\s_\omega)$ is Poincar\'e dual to $(1-m)[F]$, and hence $c_1(\s_\omega)\cup[\omega] = (1-m) \int_F \omega < 0$ since $F$ is a symplectic torus. By Taubes' result and the definition of $SW^0$, we infer $SW^0(Y_m, \s_\omega) = SW^-(Y_m, \s_\omega) = 1$, which shows $Y_m$ is not diffeomorphic to $E(1)$.
\end{proof}

Recall that the boundary of the neighborhood of an $\widetilde{E}_8$ fiber is diffeomorphic to the result of 0-framed surgery on the left-hand trefoil knot. The diffeomorphism can be realized by blowing up the $-2$ framed vertex at the end of the long leg of the $\widetilde{E}_8$ graph using a $+1$ circle, then sequentially blowing down $-1$ circles. Performing this construction on the $\widetilde{E}_8$ graph embedded in $P_m$, beginning by blowing up between the $-2$ and $-(m+1)$ framed vertices using a $+1$ curve and carrying along the resulting $-m$ circle through the process, proves:

\begin{lemma}\label{cobordlemma} The cobordism $\partial\widetilde{E}_8\to \partial P_m$ given by attaching the handle $h$ corresponding to the $-(m+1)$ framed vertex in Figure \ref{capplumbingfig} is described by the diagram in Figure \ref{4mfdsfig}(a).\hfill$\Box$
\end{lemma}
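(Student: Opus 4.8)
The plan is to produce the claimed description of the cobordism by Kirby calculus on the plumbing diagram of Figure~\ref{capplumbingfig}, regarding the nine $-2$--framed vertices (the $\widetilde{E}_8$ configuration) as a surgery presentation of the incoming boundary $\partial\widetilde{E}_8$, and regarding the single $-(m+1)$--framed vertex $h$ as a framed knot to be transported along. First I would pass from the graph to its associated framed link: each vertex becomes an unknot with the indicated framing and each edge a Hopf clasp, so that the attaching circle of $h$ appears as a meridian of the terminal unknot on the long leg, with framing $-(m+1)$. Attaching $h$ to $\partial\widetilde{E}_8\times I$ is by definition the cobordism under consideration, so any sequence of Kirby moves simplifying the $\widetilde{E}_8$ portion of the diagram---while correctly dragging the attaching circle of $h$ along as a knot in the boundary---yields an equivalent picture of the same cobordism.

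Next I would invoke the identification of $\partial\widetilde{E}_8$ with $0$--surgery on the left trefoil, realized by the explicit diffeomorphism recalled just before the lemma: a single $+1$ blow-up followed by a cascade of $-1$ blow-downs that consumes the chain of $-2$ spheres. The one adjustment needed to keep track of the four-dimensional content is to place the initial blow-up on the edge joining the terminal $-2$ vertex to $h$ rather than on a free end. Since a $+1$ blow-up on an edge raises the two incident framings by one, this turns the terminal $-2$ into a $-1$ (so the blow-down cascade can begin) and simultaneously converts $h$ from framing $-(m+1)$ into a $-m$--framed circle clasping the new exceptional curve; this is the ``$-m$ circle'' carried along. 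I would then run the blow-down cascade down the leg, at each step recording how the framing of, and the linking numbers of, the carried-along circle change. The cascade presents $\partial\widetilde{E}_8$ as the $0$--framed left trefoil and deposits the transported image of $h$ alongside it, and the final step is to check that the resulting framed link agrees with Figure~\ref{4mfdsfig}(a). The identification $\partial\widetilde{E}_8\cong{}$($0$--surgery on the left trefoil) is standard and may be quoted, so no new idea is required beyond the diagrammatic manipulation.

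The main obstacle is the bookkeeping in this manipulation. Each blow-down of a $-1$ circle through which the carried-along circle passes alters that circle's framing by the square of the relevant linking number and changes its linking with the various components that eventually assemble into the trefoil, and one must verify that the net effect leaves $h$ in precisely the position and framing displayed in Figure~\ref{4mfdsfig}(a). There is no conceptual difficulty here, only the need to perform the moves carefully and consistently; this is exactly the computation that the paragraph preceding the lemma is designed to set up, which is why the statement can be asserted with a one-line proof.
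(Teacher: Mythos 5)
Your proposal matches the paper's argument essentially verbatim: the paper's (one-paragraph) proof likewise quotes the standard identification of $\partial\widetilde{E}_8$ with $0$-surgery on the left trefoil via a $+1$ blow-up and a cascade of $-1$ blow-downs, performs the initial $+1$ blow-up between the terminal $-2$ vertex and the $-(m+1)$ vertex (turning the latter into the $-m$-framed circle), and carries that circle through the blow-downs to arrive at Figure \ref{4mfdsfig}(a). The bookkeeping you flag as the remaining work is exactly what the paper leaves implicit in asserting the lemma with no further proof.
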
 

\begin{figure}[b]
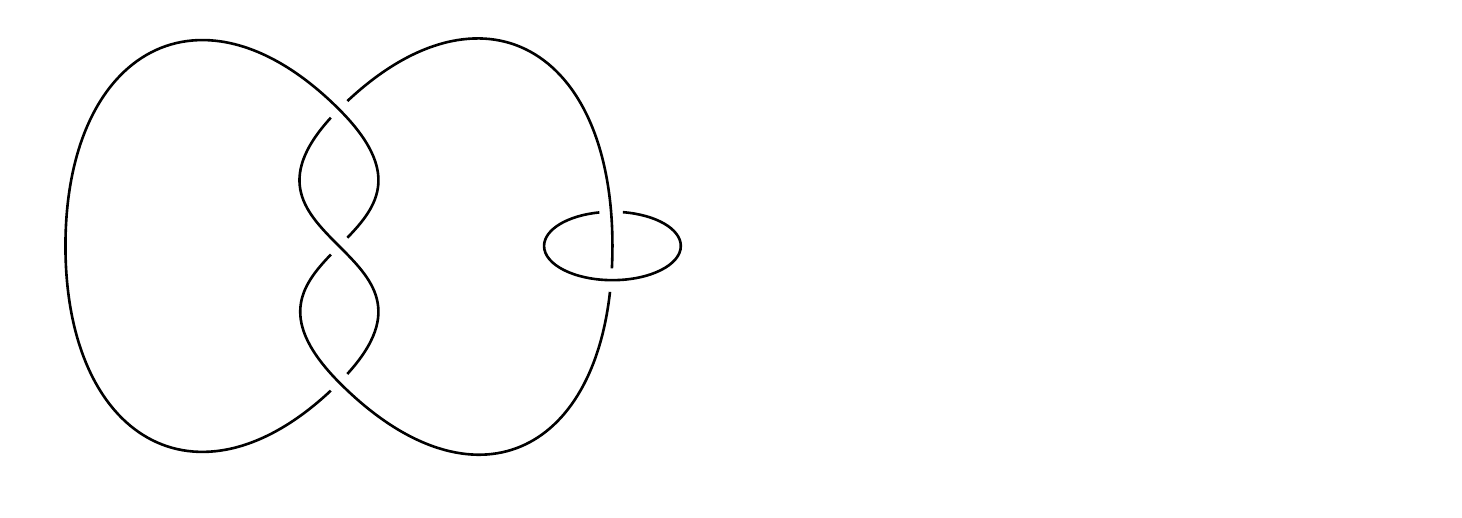
\begin{caption}{\label{4mfdsfig} On the left, the cobordism $\partial \wt{E}_8\to \partial P_m = -\Sigma(2,3,6m+1)$ given by the handle corresponding to the rightmost vertex in Figure \ref{capplumbingfig}. On the right, the 4-manifold $B_2$ with $\partial B_2 = \Sigma(2,3,13)$ described by Akbulut and Kirby.}\end{caption}
\end{figure}

In Figures \ref{4mfdsfig}(a) and \ref{sept12fig} we follow the conventions of Gompf and Stipsicz \cite{GS}, whereby framings without brackets refer to 2-handles of a cobordism built on a 3-manifold that may not be $S^3$, which is described by a surgery diagram with coefficients in brackets.

Turning the cobordism of Lemma \ref{cobordlemma} upside-down, we obtain a cobordism
\[
W_m: \Sigma(2,3,6m+1) = -\partial P_m \to -\partial \wt{E}_8,
\]
described by adding a 0-framed meridian to the $-m$ framed curve in Figure \ref{4mfdsfig}(a) and mirroring the diagram to obtain Figure \ref{sept12fig}(a) (in the case $m = 2$).

At this point we specialize to the case $m = 2$ and consider the Akbulut-Kirby contractible 4-manifold with boundary $\Sigma(2,3,13)$, which is given by the handle diagram in Figure \ref{4mfdsfig}(b).

\begin{lemma}\label{cusplemma} Let $B_2$ denote the contractible 4-manifold of Figure \ref{4mfdsfig}(b). Then the union $B_2 \cup W_2$ is diffeomorphic to the neighborhood of a cusp fiber in an elliptic fibration. 
\end{lemma}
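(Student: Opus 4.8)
The plan is to produce an explicit handle decomposition of $B_2\cup W_2$ and then reduce it by Kirby calculus to the standard diagram of a cusp neighborhood. Recall first that the neighborhood $C$ of a cusp fiber in an elliptic fibration is diffeomorphic to the trace of $0$-surgery on a trefoil: it has a handle decomposition with a single $0$-handle and a single $2$-handle attached to $B^4$ along a trefoil knot with framing $0$, so that $\partial C = S^3_0(\text{trefoil})$ (see \cite{GS}). Since $\partial(B_2\cup W_2) = -\partial\wt{E}_8$, which by the remark preceding Lemma \ref{cobordlemma} is $0$-framed surgery on a trefoil, the boundaries already agree; the content of the lemma is the identification of the fillings.

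To assemble the diagram I would superimpose the two pieces. The manifold $B_2$ is given by the Mazur-type picture of Figure \ref{4mfdsfig}(b): one $0$-handle, one $1$-handle, and one $2$-handle. The cobordism $W_2$, being the handle $h$ of Lemma \ref{cobordlemma} turned upside down, contributes a single $2$-handle, drawn (in the conventions fixing Figure \ref{sept12fig}(a)) as the dual of $h$ along the $0$-framed meridian of the $-2$-framed curve. Thus $B_2\cup W_2$ is presented by a $0$-handle, one $1$-handle, and two $2$-handles, with all attaching curves and framings read off Figures \ref{4mfdsfig}(b) and \ref{sept12fig}(a).

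Next I would simplify. The decisive step is a single $1$-/$2$-handle cancellation: after handle slides and isotopy I expect one of the two $2$-handles to run geometrically once over the $1$-handle, so that the pair cancels and leaves a $0$-handle together with a single $2$-handle. It then remains to identify the attaching circle and framing of the surviving handle; using Rolfsen twists and blow-downs to eliminate any auxiliary unknotted components, I would track this curve to a trefoil and its coefficient to $0$, matching the chirality against $-\partial\wt{E}_8 = S^3_0(\text{trefoil})$. This exhibits $B_2\cup W_2$ as $C$.

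The main obstacle is the bookkeeping in this Kirby calculus: correctly deciding which $2$-handle cancels the $1$-handle, and faithfully carrying the knot type and the framing of the surviving handle through the slides and twists so as to arrive at exactly a $0$-framed trefoil with no residual handles. Two consistency checks guide the computation. Homologically, $B_2\cup W_2$ is built from a $0$-handle, a $1$-handle, and two $2$-handles, and after the cancellation has $H_2\cong\Z$ with vanishing self-intersection---precisely the intersection form of a cusp neighborhood. Geometrically, the Euler numbers add correctly: gluing $C$ (Euler number $2$) to the $\wt{E}_8$-neighborhood (Euler number $10$) along $S^3_0(\text{trefoil})$ recovers an elliptic surface with a fiber of Euler number $12$, consistent with $Y_2$ being homeomorphic to $E(1)$ as in Lemma \ref{nonrationallemma}.
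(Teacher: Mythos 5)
Your Kirby-calculus core is the same as the paper's: build a handle picture for $B_2\cup W_2$, cancel the $1$-handle of $B_2$ against a $2$-handle, and identify the survivor as a $0$-framed trefoil. One caution on the assembly step: you cannot simply ``read off'' the attaching data by superimposing Figures \ref{4mfdsfig}(b) and \ref{sept12fig}(a), because the attaching circle of the $W_2$-handle is drawn in the presentation of $\Sigma(2,3,13)$ as (trefoil $+$ meridian), while $\partial B_2$ is presented by the symmetric two-component link underlying Figure \ref{4mfdsfig}(b). One must first produce an explicit chain of Kirby moves identifying these two surgery presentations \emph{while carrying the $0$-framed attaching circle along}; that chain is the bulk of the paper's proof (Figure \ref{sept12fig}(a)--(f)). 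You flag this as bookkeeping, which is fair, and note that the paper sidesteps the final tracking of the surviving handle with a softer argument: once the union is known to be a single $2$-handle on a knot whose $0$-surgery is $0$-surgery on the right trefoil, Gabai's theorem (a knot is fibered iff its $0$-surgery fibers) pins the knot down as the right trefoil. Your proposed checks (homology, Euler characteristics) are too weak for this last point: they cannot distinguish the cusp from the $0$-trace of the figure-eight knot or the left trefoil.

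The genuine gap is well-definedness of the union. The manifold $B_2\cup W_2$ depends a priori on the choice of gluing diffeomorphism $\partial B_2\to -\partial_- W_2$, and the Kirby-calculus argument (yours and the paper's) only exhibits \emph{one} gluing for which the result is a cusp neighborhood. This matters for how the lemma is used: in the theorem that $Y_2=P_2\cup B_2\cong E(1)$, the gluing is the contactomorphism supplied by Lemma \ref{nonrationallemma}, not the diffeomorphism produced by your calculus, so the conclusion must hold for \emph{every} identification of the boundaries. The paper closes this gap by observing that $\Sigma(2,3,13)$, being a small Seifert-fibered homology sphere, has (up to isotopy) a single nontrivial self-diffeomorphism $\phi$, and then verifying---via the further sequence of moves in Figure \ref{oct24fig}, which puts the diagram in a symmetric Seifert form where $\phi$ is a half-revolution---that the attaching circle $h$ of the $W_2$-handle is invariant under $\phi$. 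Hence the diffeomorphism type of $B_2\cup W_2$ is independent of the gluing. Your proposal never raises this issue, so as written it proves only that \emph{some} gluing of $B_2$ to $W_2$ yields a cusp neighborhood, which is not enough for the application.
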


Recall that a cusp neighborhood is described by a handle picture with a single 0-framed 2-handle attached along a right trefoil knot; see \cite[Figure 8.9]{GS}.

\begin{figure}
\begin{centering}
\hspace*{1cm}\parbox{6in}{\def\svgwidth{6in}
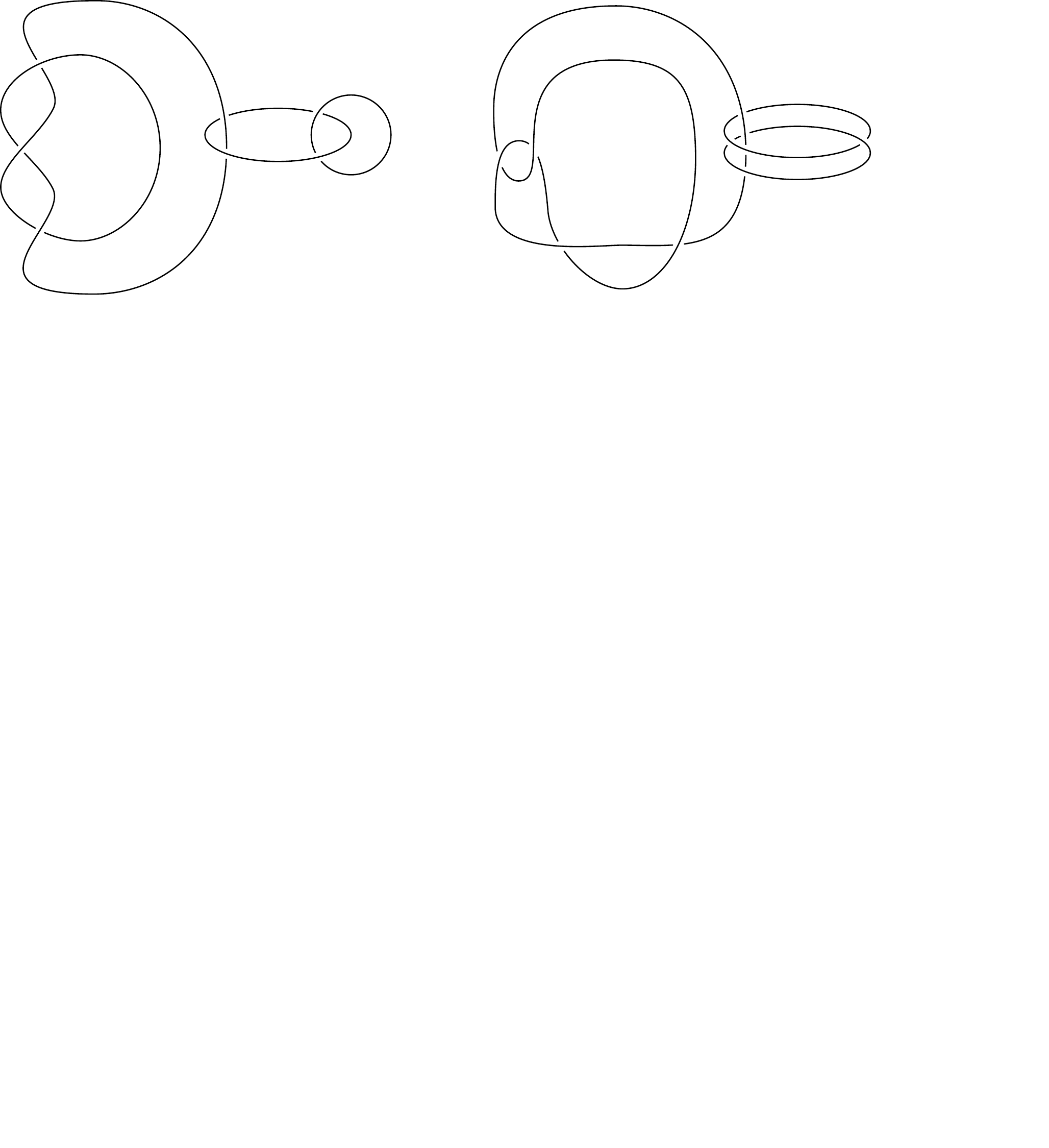}
\end{centering}
\caption{\label{sept12fig} Steps in the proof of Lemma \ref{cusplemma} (essentially mimicking the procedure in \cite{AKmazur}). From (a) to (b) apply a left Rolfsen twist around the 2-framed circle and isotop. Then blow up the clasp with a $+1$ circle and further isotop to obtain (c); two blowdowns of $-1$ curves give (d). Diagram (e) is obtained by isotopy and (f) results from blowing up the full twist in (e). }
\end{figure}

\begin{proof} The first diagram of Figure \ref{sept12fig} shows the cobordism $W_2$, where the trefoil and its meridian give a surgery description of $\Sigma(2,3,13)$ and the 0-framed unknot is the single 2-handle in $W_2$. Our strategy is to exhibit a diffeomorphism between this surgery picture for $\Sigma(2,3,13)$ and the one arising at the boundary of $B_2$ in Figure \ref{4mfdsfig}(b), carrying along the 0-framed knot; this is done in the sequence of diagrams in Figure \ref{sept12fig}. In diagram (f) in that figure the bracketed surgery diagram describes the boundary of $B_2$: indeed, the bracketed two-component link in that diagram is symmetric. Hence, placing a dot on the bracketed 0-framed circle of Figure \ref{sept12fig}(f), and removing the brackets from the $-1$ circle, gives a Kirby picture for $B_2\cup W_2$.

However, it is now clear that by sliding the $-1$ circle over the 0-framed handle representing $W_2$ in Figure \ref{sept12fig}(f), we can cancel the 1-handle in $B_2\cup W_2$. Carrying the remaining 2-handle through this cancellation gives the description of $B_2\cup W_2$ as obtained by adding a single 0-framed 2-handle to $B^4$ along a right trefoil knot. (We leave this as an exercise for the reader, but in fact since $\partial(B_2\cup W_2)$ is diffeomorphic to the result of $0$-surgery on the right trefoil, and we now know $B_2\cup W_2$ has a description consisting of a single 2-handle, it is a consequence of \cite[Corollary 8.19]{gabaifol3}---which asserts that a knot is fibered if and only if 0-surgery on the knot is fibered---that the handle is necessarily attached along the right trefoil.)

Strictly, what we have shown is that there is a diffeomorphism between $\partial B_2$ and $-\partial_-W_2$ such that the resulting glued manifold is a cusp neighborhood. However, the conclusion holds regardless of the choice of diffeomorphism, which we can see as follows. Recall that $\partial B_2 = \Sigma(2,3,13)$ is a Seifert manifold over $S^2$ with three exceptional fibers of orders 2, 3, and 13. Up to isotopy, $\Sigma(2,3,13)$ admits only one non-identity diffeomorphism $\phi$; we claim that the attaching circle for the 2-handle in $W_2$ is invariant under $\phi$, and from this the lemma follows. 

To check the invariance, we transform the first diagram of Figure \ref{sept12fig} into a standard surgery picture for a Seifert manifold, in which $\phi$ is easily described. The steps are carried out in Figure \ref{oct24fig}, in the last diagram of which is the surgery picture for $\Sigma(2,3,6m+1)$ viewed as the Seifert manifold $M(-1;\frac{1}{2},\frac{1}{3},\frac{m}{6m+1})$ (obtained from Figure \ref{anothersur} by a twist around the 2-framed circle), with the attaching circle for the handle indicated. The diffeomorphism $\phi$ is given in this picture by revolving the diagram around a horizontal line in the plane of the page by half a revolution, under which the attaching circle is clearly invariant. 
\end{proof}

\begin{figure}
\def\svgwidth{6in}
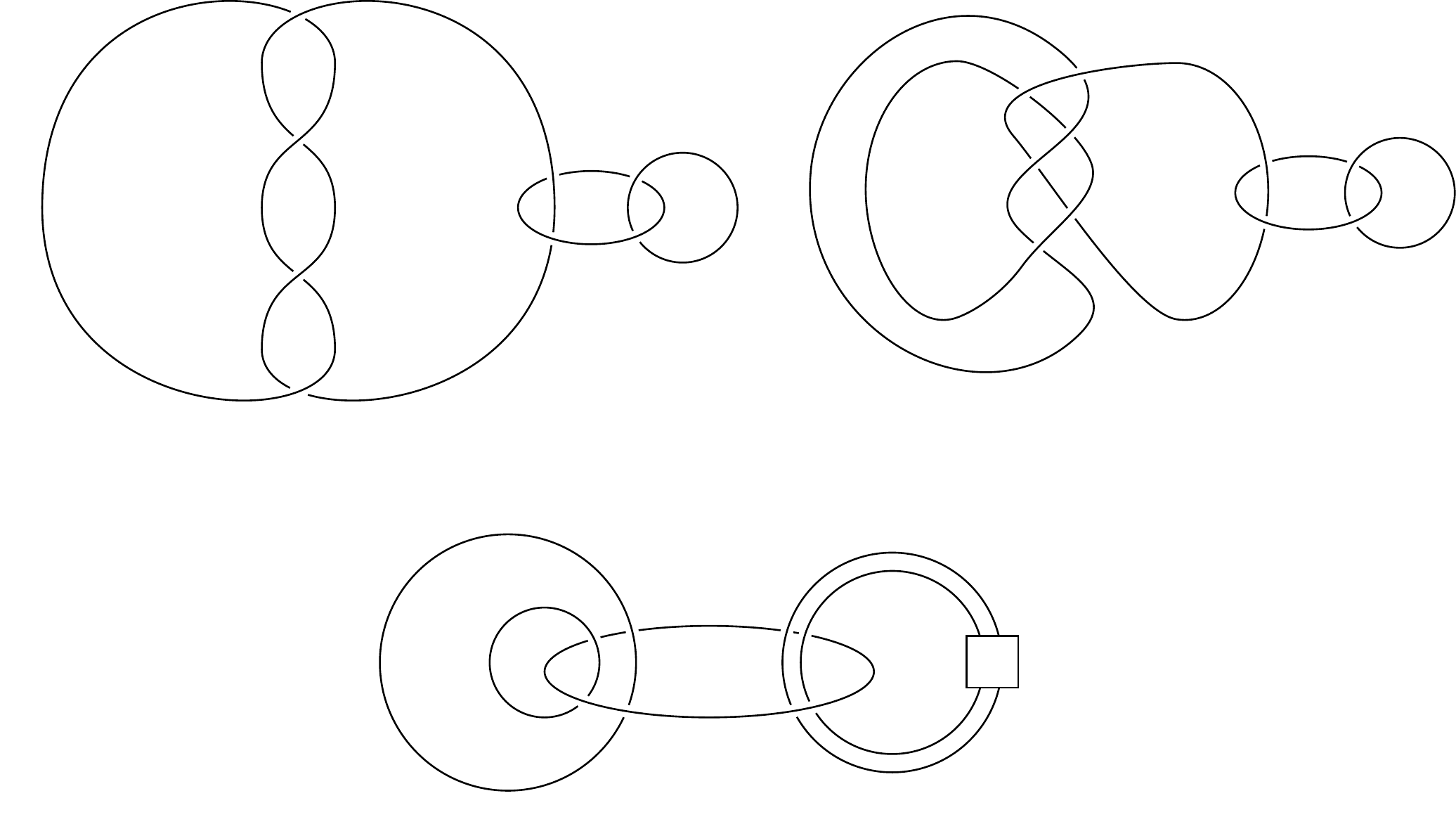
\caption{\label{oct24fig} Here curves labeled $h$ indicate the attaching circle for the cobordism $W_m$. In (a) and (b) $h$ carries framing 0 while in (c) the framing is $-6$. Diagram (a) is obtained from Figure \ref{sept12fig}(a) by blowing up a twist in the trefoil and an isotopy, along with replacing the framing 2 by the general $m$ (and ignoring brackets). Another blowup gives (b), and (c) follows by blowing up the full twist and a slam-dunk of the $m$-framed circle after sliding $h$ off it. Observe that a half-revolution of the diagram (c) around a horizontal axis preserves the diagram including the circle $h$.}
\end{figure}

\begin{theorem} The 4-manifold $Y_2 = P_2\cup B_2$, where $B_2$ is the Akbulut-Kirby manifold, is diffeomorphic to $E(1)$.
\end{theorem}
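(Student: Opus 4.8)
The plan is to prove the theorem by regrouping the handle decomposition of $Y_2 = P_2\cup B_2$ so as to exhibit it as the union of a cusp neighborhood and a neighborhood of an $\wt{E}_8$ fiber, and then to recognize this union as the rational elliptic surface $E(1)$. First I would exploit the decomposition $P_2 = N(\wt{E}_8)\cup_\partial h$, where $N(\wt{E}_8)$ denotes the neighborhood of the $\wt{E}_8$ configuration (the $-2$ spheres) in Figure \ref{capplumbingfig} and $h$ is the handle corresponding to the remaining $-(m+1)=-3$ vertex. By Lemma \ref{cobordlemma} the cobordism given by $h$ is, after turning it upside down, exactly $W_2$. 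Hence, regrouping the handles so that $B_2$ is attached at the $\partial P_2$-end of the $h$-cobordism,
\[
Y_2 = B_2\cup_\partial P_2 = \bigl(B_2\cup_\partial W_2\bigr)\cup_{\partial\wt{E}_8} N(\wt{E}_8).
\]
By Lemma \ref{cusplemma} the first factor $B_2\cup W_2$ is diffeomorphic to a neighborhood $C$ of a cusp fiber, so that $Y_2 \cong C\cup_\phi N(\wt{E}_8)$ for the gluing map $\phi$ that $N(\wt{E}_8)$ inherits from its embedding in $P_2$.

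The heart of the argument is to identify $\phi$ as the gluing coming from an elliptic fibration. Since the way $N(\wt{E}_8)$ is attached to the rest of $P_2$ is internal to $P_2$ and is untouched when the Milnor fiber $M_2$ is replaced by $B_2$, the map $\phi$ is the very same one occurring in $Z_2 = M_2\cup_\partial P_2 = E(3)$. In that honest complex surface the $\wt{E}_8$ configuration is a genuine singular ($II^*$) fiber, so $\phi$ matches the torus-fibration on $\partial N(\wt{E}_8)=\partial\wt{E}_8$ (zero-surgery on the left trefoil) with the fibration on the boundary of its complement. On the other side, $C$ carries its own elliptic fibration with a single cusp ($II$) fiber, whose boundary fibration is the torus-bundle structure on zero-surgery on the right trefoil. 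Because the torus-bundle (Seifert) structure on these boundaries is unique up to isotopy and $\phi$ preserves the fiber class, I would isotope $\phi$ to be fiber-preserving, so that the two fibrations glue. Thus $Y_2$ acquires the structure of a relatively minimal elliptic fibration over $S^2$ whose only singular fibers are one $II^*$ fiber and one cusp fiber, with the intervening region a product $T^2\times(\text{annulus})$ carrying no further singular or multiple fibers.

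To finish I would invoke the classification of elliptic surfaces. The Euler characteristics add correctly ($\chi(C)=2$, $\chi(N(\wt{E}_8))=10$, so $\chi(Y_2)=12$), and a relatively minimal elliptic surface over $S^2$ with $\chi=12$ and no multiple fibers is the rational elliptic surface $E(1)$. The presence of the reduced cusp fiber, together with the product collar described above, guarantees the absence of multiple fibers, and hence rules out a Dolgachev (exotic) fibration. Therefore $Y_2$ is diffeomorphic to $E(1)$. As a consistency check one can note that Freedman's theorem already gives that $Y_2$ is homeomorphic to $E(1)$: it is simply connected with odd intersection form and $b^+=1$, $b^-=9$, so that only the smooth structure is genuinely at stake here.

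I expect the main obstacle to be precisely the fiber-compatibility of $\phi$ and the exclusion of multiple fibers. This is exactly the dichotomy that powers Lemma \ref{nonrationallemma}, where the analogous manifold built from an acyclic \emph{Stein} filling is shown to be an exotic, non-diffeomorphic copy of $E(1)$; the present theorem asserts the opposite conclusion for the non-Stein filling $B_2$, and the entire force of Theorem \ref{AKthm} lies in this contrast. Concretely, I must be certain that the cusp neighborhood's fibration and the $\wt{E}_8$ neighborhood's fibration are glued by the fibration-compatible identification inherited from $E(3)$, and not by some twisting that would introduce a multiple fiber or destroy the fibration. Establishing this rests on the uniqueness up to isotopy of the torus-bundle structure on zero-surgery on the trefoil, together with the observation that the gluing region is a product inherited unchanged from the honest elliptic surface $Z_2=E(3)$.
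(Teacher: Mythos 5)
Your first half coincides with the paper's own argument: both regroup $Y_2 = B_2\cup P_2$ as $\bigl(B_2\cup W_2\bigr)\cup \wt{E}_8 = C\cup\wt{E}_8$ using Lemmas \ref{cobordlemma} and \ref{cusplemma}. The divergence is in how the gluing ambiguity is handled. The paper quotes the standard decomposition $E(1)\cong \wt{E}_8\cup C$ and then shows the choice of gluing diffeomorphism is irrelevant: the mapping class group of $\partial C$ (zero-surgery on the right trefoil) has a single nontrivial element, realized by a half-revolution of the symmetric $0$-framed trefoil diagram, and that involution extends over $C$; hence every gluing of $\wt{E}_8$ to $C$ yields the same manifold, namely $E(1)$. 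You instead try to promote the specific gluing $\phi$ inherited from $P_2$ to a fiber-preserving one, assemble an elliptic fibration on $Y_2$ with one cusp fiber and one $II^*$ fiber, and invoke the classification of relatively minimal elliptic fibrations over $S^2$ without multiple fibers. That endgame is viable in principle: once fiber-compatibility is in hand, the two singular fibers can be perturbed to $2+10=12$ nodal fibers and Moishezon's classification of genus-one Lefschetz fibrations finishes the job.

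However, as written there is a genuine gap, and it sits exactly at the step you yourself flag as the main obstacle. Your argument that $\phi$ is fibration-compatible because it is ``inherited unchanged from $E(3)$'' controls only the $\wt{E}_8$ side: in $Z_2=E(3)$ the complement of the $\wt{E}_8$ neighborhood is $M_2\cup W_2$, a large piece containing many singular fibers, not a cusp neighborhood, so nothing about $E(3)$ relates $\phi$ to the cusp fibration on $C$. The cusp fibration on $C=B_2\cup W_2$ comes from the abstract Kirby-calculus diffeomorphism of Lemma \ref{cusplemma}, which carries no a priori compatibility with $\phi$. To bridge the two fibrations on $\partial C$ you need precisely the statement that any two genus-one fibrations of zero-surgery on the trefoil over $S^1$ are isotopic (up to reversing the circle). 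You assert this as ``uniqueness of the torus-bundle (Seifert) structure up to isotopy,'' conflating two different structures---the fibration by tori over $S^1$, which is what you need, and the Seifert fibration by circles over the orbifold $S^2(2,3,6)$, which is not---and you give no proof or reference. The statement is in fact true (every incompressible torus in this Seifert manifold is horizontal, since the base orbifold contains no essential curves, and one can then argue that such a torus is isotopic to a fiber and that fibrations of $T^2\times I$ rel boundary are standard), but it is the real content of the problem, of essentially the same depth as the mapping-class-group computation the paper uses in its place; until it is established, your proof is incomplete.
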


This result, combined with Lemma \ref{nonrationallemma}, shows that $B_2$ cannot admit a Stein structure and completes the proof of Theorem \ref{AKthm}.

\begin{proof}
It is well-known that $E(1)$ admits a decomposition $E(1) \cong \wt{E}_8\cup C$, where $C$ is a cusp neighborhood. We have seen above that 
\[
Y_2 = P_2 \cup B_2 = \wt{E}_8 \cup W_2 \cup B_2 = \wt{E}_8 \cup C,
\]
so the only issue is the identification between the boundaries of $\wt{E}_8$ and $C$. But the boundary of $C$, diffeomorphic to the result of 0-surgery on the right trefoil, is a Seifert manifold over a 2-sphere with three multiple fibers of orders 2, 3 and 6; similarly to $\Sigma(2,3,13)$ its mapping class group has only one nontrivial element corresponding to a half-revolution around a line in a suitably symmetric surgery diagram. In this case we can take this diagram to be just the 0-framed right trefoil knot (as it appears in Figure \ref{sept12fig}(a), for example) and the indicated diffeomorphism clearly extends over the 4-manifold $C$.
\end{proof}

\bibliographystyle{plain}
\bibliography{mybib}

\end{document}